\newtheorem{thm}{Theorem}[section]
\newtheorem{prop}[thm]{Proposition}
\newtheorem{ass}[thm]{Assumption}
\newtheorem{cor}[thm]{Corollary}
\theoremstyle{definition}
\newtheorem{rem}[thm]{Remark}
\newcommand{\bmu}{\boldsymbol \mu}
\newcommand{\C}{\mathbb C}
\newcommand{\Curl}{\operatorname{Curl}}
\newcommand{\Div}{\operatorname{Div}}
\newcommand{\Id}{\operatorname{Id}}
\newcommand{\dist}{\operatorname{dist}}
\newcommand{\N}{\mathbb N}
\newcommand{\R}{\mathbb R}
\newcommand{\sym}{\operatorname{sym}}
\newcommand{\indicatornoacc}[1]{ \mathds{1}_{ #1 } }
\newcommand{\tr}{\operatorname{tr}}
\newcommand{\supp}{\operatorname{supp}}
\newcommand{\weakto}{\rightharpoonup}
\newcommand{\xto}[1]{\xrightarrow{ #1 }}
\newcommand{\xweakto}[1]{ \stackrel{ #1 }{\rightharpoonup} }
\newcommand{\Z}{\mathbb Z}
\newcommand{\Np}{\mathbb{N}_+}
\newcommand{\e}{\varepsilon}
\newcommand{\bb}{\mathbf b}
\newcommand{\bx}{\mathbf x}
\newcommand{\cV}{\mathcal V}
\newcommand{\Vreg}{V_{\operatorname{reg}}}
\newcommand{\Vregt}{\widetilde V_{\operatorname{reg}}}
\newcommand{\lrhaa}[1]{\left( #1 \right)}
\newcommand{\Bighaa}[1]{\Big( #1 \Big)}
\newcommand{\bighaa}[1]{\big( #1 \big)}
\def\weakto{\rightharpoonup}
\DeclareFontFamily{U}{mathx}{\hyphenchar\font45}
\DeclareFontShape{U}{mathx}{m}{n}{
      <5> <6> <7> <8> <9> <10>
      <10.95> <12> <14.4> <17.28> <20.74> <24.88>
      mathx10
      }{}
\DeclareSymbolFont{mathx}{U}{mathx}{m}{n}
\DeclareMathAccent{\widecheck}{0}{mathx}{"71}
\begin{document}

\title{The continuum limit of interacting dislocations \\ on multiple slip systems} 

\author{Patrick van Meurs} 



%

\maketitle

\begin{abstract} 
In this paper we derive the continuum limit of a multiple-species, interacting particle system by proving a $\Gamma$-convergence result on the interaction energy as the number of particles tends to infinity. As the leading application, we consider $n$ edge dislocations in multiple slip systems. Since the interaction potential of dislocations has a logarithmic singularity at zero with a sign that depends on the orientation of the slip systems, the interaction energy is unbounded from below. To make the minimization problem of this energy meaningful, we follow the common approach to regularise the interaction potential over a length-scale $\delta > 0$. The novelty of our result is that we leave the \emph{type} of regularisation general, and that we consider the joint limit $n \to \infty$ and $\delta \to 0$. Our result shows that the limit behaviour of the interaction energy is not affected by the type of the regularisation used, but that it may depend on how fast the \emph{size} (i.e., $\delta$) decays as $n \to \infty$.
\end{abstract}

\noindent \textbf{Keywords}: {Particle system, many-particle limit, $\Gamma$-convergence, dislocations} \\
\textbf{MSC}: {
  82C22, 
  74Q05, 
  35A15, 
  82D35 
}


\section{Introduction}
\label{s:intro}

In this paper we study multiple-species particle systems with singular interactions. We focus on the leading application to edge \emph{dislocations} in crystals with multiple slip directions. Dislocations are defects in the crystallographic lattice of metals, and the concerted motion of a large number of such defects gives rise to plastic deformation at the macroscopic scale. Yet, a satisfactory connection between the microscopic description of a large number of dislocations and macroscopic models for plastic deformation remains elusive. This paper takes a next step in clarifying this connection.

To take this next step, we consider a simplified microscopic model for the interaction energy of a collection of dislocations. In this model, dislocations are modelled as point defects in a continuum medium in two dimensions. Our objective is to derive a continuum energy as the number of dislocations tends to infinity, where dislocations are described in terms of a continuum density. 

Our objective fits to the vast literature on the derivation of continuum, density-based models from interacting particle systems, both in deterministic and stochastic settings. In most studies the particles are identical (single-species), and their interactions can be attractive, repulsive (or a combination of the two), smooth or singular, radial or anisotropic. The derivation of such continuum models is far less studied for particle systems involving multiple-species, especially for singular interactions.  This has motivated us to go beyond the application to dislocations by starting from a more general multiple-species, interacting particle energy, and to derive the continuum limit thereof. Besides the application to edge dislocations, we also show that our setting applies to screw dislocations, vortices and particle systems interacting by Riesz-like potentials.

\subsection{Formal setup for the case of edge dislocations}

We consider Volterra's model for $n \in \N$ straight and parallel edge dislocations. In this model, each dislocation (labelled by $i = 1,\ldots,n$) is characterised by a couple $(x_i, b_i)$, where $x_i \in \R^2$ is its position and $b_i$ is its Burgers vector in the unit circle $\mathbb S$. Figure \ref{fig:sett} illustrates a possible configuration. While dislocations are mobile, their Burgers vectors are fixed; hence, we regard $\bx=(x_1,\dots,x_n) \in (\R^2)^n$ as the unknown and $\bb = (b_1,\dots,b_n) \in \mathbb S^n$ as a list of given parameters.

\begin{figure}[h]
\centering
\begin{tikzpicture}[scale=.8]
    \def \r {0.15}
    \def \dlcScale {0.25}
      
    \begin{scope}[shift={(1.5,0)},scale=\dlcScale, rotate=0]
        \fill (-1, -\r) rectangle (1, \r); 
        \fill (-\r, 0) rectangle (\r, 2); 
        \fill (-1, 0) circle (\r);
        \fill (1, 0) circle (\r);
        \fill (0, 2) circle (\r);  
        \draw[->] (0,0) -- (3,0) node[right]{$b_2$};
        \draw (0,0) node[below]{$x_2$};
    \end{scope}
    
    \begin{scope}[shift={(0,2)},scale=\dlcScale, rotate=60]
        \fill (-1, -\r) rectangle (1, \r); 
        \fill (-\r, 0) rectangle (\r, 2); 
        \fill (-1, 0) circle (\r);
        \fill (1, 0) circle (\r);
        \fill (0, 2) circle (\r);  
        \draw[->] (0,0) -- (3,0) node[right]{$b_3$};
        \draw (0,0) node[right]{$x_3$};
    \end{scope}
    
    \begin{scope}[shift={(-1.15,0)},scale=\dlcScale, rotate=240]
        \fill (-1, -\r) rectangle (1, \r); 
        \fill (-\r, 0) rectangle (\r, 2); 
        \fill (-1, 0) circle (\r);
        \fill (1, 0) circle (\r);
        \fill (0, 2) circle (\r);
        \draw (0,0) node[left]{$x_1$};   
        \draw[->] (0,0) -- (3,0) node[left]{$b_1$};
    \end{scope}
    
    \begin{scope}[shift={(-2,2)},scale=\dlcScale, rotate=60]
        \fill (-1, -\r) rectangle (1, \r); 
        \fill (-\r, 0) rectangle (\r, 2); 
        \fill (-1, 0) circle (\r);
        \fill (1, 0) circle (\r);
        \fill (0, 2) circle (\r);  
        \draw (0,0) node[right]{$x_5$};
        \draw[->] (0,0) -- (3,0) node[right]{$b_5$};
    \end{scope}
    
    \begin{scope}[shift={(-4,1)},scale=\dlcScale, rotate=0]
        \fill (-1, -\r) rectangle (1, \r); 
        \fill (-\r, 0) rectangle (\r, 2); 
        \fill (-1, 0) circle (\r);
        \fill (1, 0) circle (\r);
        \fill (0, 2) circle (\r);  
        \draw (0,0) node[below]{$x_4$};
        \draw[->] (0,0) -- (3,0) node[right]{$b_4$};
    \end{scope}
    
    \begin{scope}[shift={(-9,1)}, scale=0.7]
        \draw[dotted] (0,0) circle (2);
        \fill (0, 0) circle (\r); 
        \draw (-2,0) node[right]{$\mathbb S$};
        \draw[->] (0,0) -- (2,0) node[right]{$\xi_1$};
        \draw[->,rotate=60] (0,0) -- (2,0) node[above]{$\xi_2$};
	   \draw[->, rotate=240] (0,0) -- (2,0) node[below]{$\xi_3$};
    \end{scope}
\end{tikzpicture} 
\caption{Example of a configuration of $n = 5$ many edge dislocations denoted by `$\perp$'. The set $\{ \xi_1, \xi_2, \xi_3 \} \subset \mathbb S$ is a given set of admissible Burgers vectors.}
\label{fig:sett}
\end{figure}
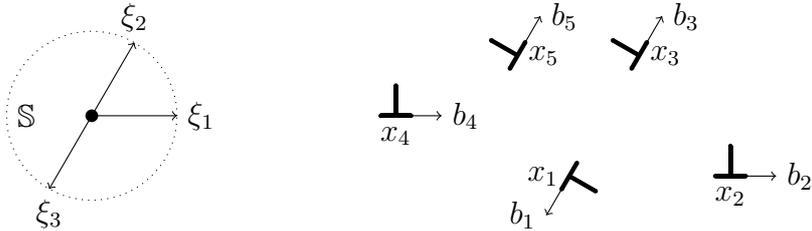

The interaction energy for Volterra dislocations is given by 
\begin{align} \label{Entil:unreg}
  \tilde E_n (\bx; \bb) 
  := \frac1{2n^2} \sum_{i = 1}^n \sum_{\substack{ j = 1 \\ j \neq i } }^n V (x_i - x_j;b_i,b_j),
\end{align}
where $V$ is the pairwise interaction potential between any two dislocations $(x_i,b_i)$ and $(x_j,b_j)$. It is given by (see, e.g., \cite[(5--16)]{HirthLothe82})
\begin{equation} \label{V:bibj}
    V(x; b_i,b_j) 
    := - b_i\cdot b_j \log | x | - \left(b_i^\perp\cdot \frac{x}{|x|}\right)\left(b_j^\perp\cdot \frac{x}{|x|}\right)
    \quad \text{for all } x \in \R^2 \setminus \{0\},
  \end{equation}
where $b^\perp$ denotes the clockwise rotation of $b$ by $\pi/2$. This expression is related to the Green's function of the elasticity operator in an isotropic medium. Consequently, it has a logarithmic singularity at the origin. The sign of the singularity depends on $b_i \cdot b_j$. Hence, aside from special cases, $\bx \mapsto \tilde E_n (\bx; \bb)$ is unbounded from below, even when all $x_i$ are confined to a compact set. Thus, to obtain a meaningful interaction energy for describing the dislocation positions, we need to alter the definition of $\tilde E_n$.

\subsection{The regularised interaction energy}
\label{s:intro:Vreg}

The unboundedness of $\tilde E_n$ from below is a consequence of the singularity of $V$, which is an artefact from Volterra's dislocation model. In fact, more detailed, atomistic descriptions for the dislocation interaction potential are not singular at $0$; see, e.g., \cite[Sec.~5.3]{ArizaOrtiz05}. In order to remove the singularity at zero without adding the complexity of atomistic effects, the common approach is to \emph{regularise} $V$ over an atomic length scale $\delta > 0$. This has been done in various ways: 
\begin{itemize}
  \item[-] via the phase-field model developed by Peierls and Nabarro \cite{GarroniMueller06,KoslowskiCuitinoOrtiz02, MonneauPatrizi12, Nabarro47,Peierls40}; 
  \item[-] by perforating the elastic medium around each dislocation \cite{CermelliLeoni06, GarroniLeoniPonsiglione10, MoraPeletierScardia17}; 
  \item[-] by smearing out of the dislocation core by a convolution kernel \cite{AlvarezCarliniHochLBouarMonneau05, CaiArsenlisWeinbergerBulatov06, ContiGarroniOrtiz15}, or, equivalently, by convoluting $V$ by the related kernel; 
  \item[-] via a cut-off radius within which dislocations do not interact \cite{HirthLothe82}. 
\end{itemize}
The choice for the regularisation $V_\delta$ of $V$ depends on factors such as accuracy with respect to atomistic models, computational convenience, the possibility to describe dynamics, well-posedness or the possibility to establish a continuum limit. However, there is no consensus on which regularisation works best, and each available regularisation has a set of drawbacks (see, e.g., the overview in \cite{CaiArsenlisWeinbergerBulatov06}). For instance, a regularisation which leads to a computationally cheap model may fail in terms of accuracy.

Because of the different available choices for $V_\delta$, we set out to identify a large class of such choices for which the continuum limit passage as $n \to \infty$ yields a meaningful limit energy, and for which the limit is independent of the choice for $V_\delta$. This class of regularisations is detailed in Assumption \ref{a:Vd}; in Section \ref{s:intro:results:VVd} we highlight the key structural assumption. Since we interpret the limit passage $n \to \infty$ as zooming out from the material, we assume that the atomistic length-scale $\delta = \delta_n$ converges to $0$ as $n \to \infty$. 

Given $\delta_n \to 0$ as $n \to \infty$ and a regularisation $V_{\delta_n}$, the corresponding regularised interaction energy is
\begin{align} \label{En:sum}
E_n (\bx; \bb) 
  := \frac1{n^2} \sum_{i=1}^n \sum_{j=1}^n V_{\delta_n} (x_i - x_j;b_i,b_j).
\end{align}
This is the energy for a collection of edge dislocations which we cover in our main result on passing to the limit $n\to \infty$. In contrast to $\tilde E_n$, we have multiplied in \eqref{En:sum} by $2$ for notational convenience. More importantly, we have included the diagonal in the double sum. This diagonal corresponds to a constant contribution to the energy given by
\begin{equation} \label{gamman:bi}
  \gamma_n := \frac1{n^2} \sum_{i=1}^n V_{\delta_n} (0;b_i,b_i).
\end{equation}
In terms of modelling, $\gamma_n$ may be interpreted as a self-energy for dislocations. Mathematically, the condition $\gamma_n \to 0$ as $n \to \infty$ turns out to be crucial for our main convergence result to hold. This condition turns for most regularisations $V_{\delta_n}$ into a lower bound on $\delta_n$. Indeed, in view of \eqref{V:bibj}, a typical regularisation (for instance, a regularisation by convolution with a mollifier supported in the ball $B(0, \delta_n)$) satisfies $V_{\delta_n}(0; b, b) \sim \log \frac1{\delta_n}$. Then, the condition $\gamma_n \to 0$ is equivalent to
\begin{equation} \label{deln:regime}
  \log \frac1{\delta_n} \ll n.
\end{equation}

\subsection{Main result: the limit $n \to \infty$}
\label{s:intro:results}

We start by recalling previous convergence results as $n \to \infty$ for systems of $n$ dislocations: 
\begin{itemize}
  \item[-] in one spatial dimension (in particular, single-slip), both in the single-sign case (see, e.g., \cite{FocardiGarroni07, 
ForcadelImbertMonneau08, 
ForcadelImbertMonneau09, 
HajjIbrahimMonneau09, 
HallChapmanOckendon10, 
Hall11, 
GeersPeerlingsPeletierScardia13, 
VanMeursMunteanPeletier14, 
GarroniVanMeursPeletierScardia16, 
HallHudsonVanMeurs18}), 
and in the multiple-sign case (see \cite{ChapmanXiangZhu15, 
vanMeurs18});

\item[-] in two dimensions in the single-slip and single-sign case \cite{MoraPeletierScardia17}, and in the multiple-slip case \cite{GarroniLeoniPonsiglione10,
Ginster19,
LauteriLuckhaus16ArXiv,
MullerScardiaZeppieri14};  

\item[-] for the corresponding gradient flows \cite{AlicandroDeLucaGarroniPonsiglione14, AlicandroDeLucaGarroniPonsiglione16,ElHajjIbrahimMonneau09, 
ForcadelImbertMonneau12, 
VanMeursMorandotti19,
VanMeursMuntean14,
MonneauPatrizi12}
 and rate independent flows \cite{MoraPeletierScardia17}.
\end{itemize}
In particular, the result in \cite{GarroniLeoniPonsiglione10} is closely related to ours; it also provides a convergence result for a large collection of edge dislocations. However, \cite{GarroniLeoniPonsiglione10} considers a different energy $E_n$ which depends on both $\bx$ and a strain field $\beta : \R^2 \to \R^{2\times 2}$. We give a proper comparison with this energy and the related convergence results in Section \ref{s:intro:disc}. Instead of considering strain fields, we establish in this paper the convergence of $E_n$ from the viewpoint of particle systems, and show that our result applies to a large class of interaction potentials $V$ and to several regularisations $V_{\delta_n}$ thereof. In particular, this class covers integrable singularities of various strengths. Therefore, we believe our result to be interesting for general multiple-species, interacting particle system such as those in recent studies
\cite{BerendsenBurgerPietschmann17, 
DiFrancescoFagioli13, 
DiFrancescoFagioli16,
EversFetecauKolokolnikov17,
GarroniVanMeursPeletierScardia19DOI,
vanMeurs18,
VanMeursMorandotti19,
Zinsl16}. 

\subsubsection{The key assumption on $V$ and $V_{\delta_n}$} 
\label{s:intro:results:VVd}

We return our focus to the case of edge dislocations with $V$ as in \eqref{V:bibj}. One of the main challenges for passing to the limit $n \to \infty$ in the energy $E_n$ is to prove a lower bound on $E_n$ which is uniform in $n$. The origin of this difficulty is the fact that $V( x ; b_i, b_j)$ is unbounded from below whenever $b_i \cdot b_j \neq 0$. More precisely, if $b_i \cdot b_j > 0$, then $V( x ; b_i, b_j) \to -\infty$ as  $|x| \to \infty$; if $b_i \cdot b_j < 0$, then $V( x ; b_i, b_j) \to -\infty$ as $|x| \to 0$. Due to the minimal requirement that $V_{\delta_n} \to V$ as $n \to \infty$, it is non-trivial to find a uniform lower bound on $E_n$. The unboundedness of the tails of $V$ can be dealt with in several well-known ways; we choose to deal with this later on in the choice of topology in our $\Gamma$-convergence result as $n \to \infty$.

The singularity of $V$ at $0$ requires special care when proving a lower bound on $E_n$ for a suitable $V_{\delta_n}$. The key feature of $V$ which we use to prove such a lower bound is that it can be decomposed as 
\begin{equation} \label{V:W-form-intro}
  V(x;b_i,b_j) = \sum_{k=1}^{\mathsf K} \left( \overline W_k^{b_i} \ast W_k^{b_j} \right) (x) + \Vreg(x ;b_i,b_j)
\end{equation}
for some $\mathsf K \geq 1$, $W_k^b \in L^1(\R^2)$ and $\Vreg \in C(\R^2)$, where $\overline W_k^b (x) := W_k^b (-x)$. In this decomposition, the convolution terms carry the singularity and the function $\Vreg \in C(\R^2)$ is a regular remainder which describes the possibly non-integrable tails of $V$. Inspired by \cite{CermelliLeoni06}, we construct such a decomposition in Proposition \ref{p:V:CKK} with $\mathsf K = 4$. 

For the class of regularisations $V_{\delta_n}$ (specified later in Assumption \ref{a:Vd}) we require the same kind of decomposition as in \eqref{V:W-form-intro}. More precisely, we assume that there exist regularisations $W_{{\delta_n},k}^{b_i}$ and $\Vreg^{\delta_n}$ of $W_{k}^{b_i}$ and $\Vreg$ such that 
\begin{equation} \label{V:W-form-d-intro}
  V_{\delta_n}(x ;b_i,b_j) 
  = \sum_{k=1}^{\mathsf K} \left( \overline W_{{\delta_n},k}^{b_i} \ast W_{{\delta_n},k}^{b_j} \right) (x) 
    + \Vreg^{\delta_n}(x ;b_i,b_j).
\end{equation}
Our key motivation for this splitting is that the convolution terms yield -- after summing in \eqref{En:sum} -- a \emph{non-negative} contribution to $E_n$ (see the derivation in \eqref{En:shortt}). Consequently, we obtain a lower bound on $E_n$ on compact subsets of $(\R^2)^n$ independent of $n$ and $\delta$.
\smallskip

\subsubsection{The $\Gamma$-limit of the interaction energy} 
\label{s:intro:results:G}
Our convergence result, Theorem \ref{t}, characterises the $\Gamma$-limit of the discrete energy $E_n$ with respect to a modified narrow topology. The narrow topology is natural when considering the limit of a collection of $n$ particles to a particle density. We modify it by requiring the particles to remain in a compact set. We choose this modification for the technical reason to obtain a lower bound on $E_n$, which is necessary for establishing a meaningful $\Gamma$-limit. Alternative choices would be to add a confining potential to $E_n$ or to consider a finite domain $\Omega$ with boundary conditions that keep the dislocations confined.

To describe this topology in detail, we first introduce the necessary framework. We fix an $n$-independent, finite set 
\begin{equation*} 
  \mathcal{B} := \{\xi_1,\dots,\xi_S\} \subset \mathbb S
\end{equation*}
of slip directions; see Figure \ref{fig:sett} for an example. We require that $b_i \in \mathcal{B}$ for any $i$ and any $n$. This requirement allows for a convenient relabelling of the dislocation positions: for each $s\in \{1,\dots,S\}$, we denote by $x_i^s \in \R^2$ with $i=1,\dots,n_s$ all the $n_s \in \N$ dislocations with Burgers vector $\xi_s$. We note that $\sum_{s=1}^S n_s = n$. Then, for each $\xi_s\in \mathcal{B}$, we consider the empirical measure
\begin{equation} \label{muns}
   \mu^s_n= \frac1n \sum_{i=1}^{n_s} \delta_{x_i^s} \in \mathcal M_+ (\R^2), \quad s=1,\dots, S,
\end{equation}
and observe that 
$$
  \bmu_n := (\mu_n^1,\dots,\mu_n^S) \in \mathcal P (\R^2 \times \{1, \ldots, S\}),
$$
where $\mathcal M_+ (\R^2)$ is the space of non-negative, finite measures on $\R^2$, and $\mathcal P (\R^2) \subset \mathcal M_+ (\R^2)$ is the subspace of  measures with unit mass, i.e., the space of probability measures. Finally, we rewrite $E_n$ as
\begin{align} \notag
  E_n (\bx; \bb) 
  &= \frac1{n^2} \sum_{s=1}^S \sum_{t=1}^S \sum_{i = 1}^{n_s} \sum_{j = 1}^{n_t} V_{\delta_n} (x_i^s - x_j^t; \xi_s, \xi_t) \\\label{En:mun:intro}
  &= \sum_{s=1}^S \sum_{t=1}^S \int_{\R^2} \int_{\R^2} V_{\delta_n} (x - y; \xi_s, \xi_t) \, d \mu_n^t (y) \, d \mu_n^s (x)
  =: E_n (\bmu_n).
\end{align}

Next we specify the modified narrow convergence for $\bmu_n$ in $\mathcal P (\R^2 \times \{1, \ldots, S\})$. Convergence in this topology means that $\mu_n^s$ converges in the narrow topology to some $\mu^s \in \mathcal M_+ (\R^2)$ as $n \to \infty$ for all $s \in \{1,\dots,S\}$, and that the support of $\mu_n^s$ is bounded uniformly in $n$ and $s$.

The $\Gamma$-limit of $E_n$ is an energy $E(\bmu)$ defined for $\bmu = (\mu^1, \dots, \mu^S) \in \mathcal P (\R^2 \times \{1, \ldots, S\})$. It has a somewhat complicated expression; see \eqref{E:def}. However, when $\bmu$ is absolutely continuous with sufficiently regular density, the expression simplifies to
\begin{align}\label{E:intro}
E (\bmu) 
= \sum_{s=1}^S \sum_{t=1}^S \int_{\R^2} \int_{\R^2} V (x - y; \xi_s, \xi_t) \, d \mu^t (y) \, d \mu^s (x),
\end{align}
which is clearly similar to \eqref{En:mun:intro}.

The modified narrow topology for $\bmu_n$ is however not the natural one for edge dislocations. Indeed, one can only observe the density of the net Burgers vector given by 
$$
   \kappa := \sum_{s=1}^S \xi_s \mu^s \in \mathcal M (\R^2;\R^2),
$$
which is in general not enough to reconstruct $\bmu$ uniquely. For this reason, in Corollary \ref{c:t} we study the $\Gamma$-convergence of $E_n$ with respect to the modified narrow convergence of 
\begin{equation} \label{kappan}
  \kappa_n := \sum_{s=1}^S \xi_s \mu_n^s \in \mathcal M (\R^2; \R^2)
\end{equation}
instead. The $\Gamma$-limit of $E_n$ is in this case a relaxation of the functional $E$ in \eqref{E:intro}; see \eqref{Em:def} for details.

\smallskip

In the general setting which we consider in Theorem \ref{t}, we replace in \eqref{En:mun:intro} and \eqref{E:intro} the integrands by some potentials $V_{\delta_n}^{st}(x-y)$ and $V^{st}(x-y)$, and interpret $\bmu$ as a list of densities of $S$ different species. Then, we interpret $V^{st}$ as the interaction potential between species $s$ and $t$, and $V_{\delta_n}^{st}$ as a regularisation of it. In Assumptions \ref{a:V} and \ref{a:Vd} we list the precise conditions on $V^{st}$ and $V_{\delta_n}^{st}$, which are strongly inspired by \eqref{V:W-form-intro} and \eqref{V:W-form-d-intro}. Note that this more general formulation requires no description of Burgers vectors $\xi_s$.

\subsection{Comments and outlook} 
\label{s:intro:disc}

We conclude the introduction by a discussion on the implications, applications and limitations of Theorem \ref{t}.
\smallskip

\emph{Special case in which regularisation is not needed}.
For the special case in which all potentials $V^{st}$ are lower-semicontinuous and bounded from below on bounded sets, the $\Gamma$-convergence result in Theorem \ref{t} applies to the energy $\tilde E_n$ in \eqref{Entil:unreg}. In this case, $V^{st}$ need not satisfy the structure assumption \eqref{V:W-form-intro}. This setting is typical for the single-species case, i.e., $S=1$. For $S = 1$, a $\Gamma$-convergence result has been established in \cite[Lem.~5.3]{CanizoPatacchini18DOI}.
\smallskip

\emph{Bounded domains}.
As mentioned at the start of Section \ref{s:intro:results:G}, an alternative choice for keeping the dislocations confined to a compact set is to consider a bounded domain $\Omega \subset \R^2$. However, care is needed, as the self-energy of dislocations may be unbounded from below in the vicinity of $\partial \Omega$ \cite{HudsonMorandotti17}. This problem is side-stepped in \cite{MoraPeletierScardia17} by requiring the dislocations to stay away from $\partial \Omega$ by a prefixed distance. Under this requirement the interaction of the dislocations with the boundary yields a continuous contribution to $E_n$, and therefore it causes no additional difficulty for proving a $\Gamma$-convergence result. However, enforcing the dislocations to stay a fixed distance away from $\partial \Omega$ is essentially the same as the modification of the topology employed in this paper.
\smallskip

\emph{Higher dimensions}.
While we have presented our main theorem in two spatial dimensions, our proof method applies to higher spatial dimensions with minor, obvious modifications.
\smallskip

\emph{Class of admissible regularisations}.
Since our main theorem, Theorem \ref{t}, allows for a class of regularisations (see Assumption \ref{a:Vd} for details and Section \ref{s:V:edge} for practical examples), it serves as a criterion for the available regularisations (see Section \ref{s:intro:Vreg}) under which a collection of edge dislocations can be approximated by a dislocation density. In particular, the related continuum energy is independent of the choice of regularisation. 

The assumption that the diagonal contribution $\gamma_n$ (see \eqref{gamman:bi}) vanishes as $n \to \infty$ is consistent with the viewpoint of zooming out as $n \to \infty$. Indeed, if we interpret $\delta_n$ as an atomic length scale and assume that $\delta_n$ scales linearly with the typical distance between neighouring dislocations (which is $1/\sqrt n$ in our two-dimensional setting), we obtain $\delta_n^2 \sim 1/n$. This is well within the range \eqref{deln:regime} which we obtained for a typical regularization $V_{\delta_n}$.
\smallskip

\emph{Class of admissible potentials}.
Theorem \ref{t} applies to a class of potentials $V^{st}$ (see Assumption \ref{a:V}) beyond \eqref{V:bibj}. In Section \ref{s:Riesz} we characterise  prototypical examples of potentials within this class, and demonstrate how to construct admissible regularisations thereof.

Here, we give a formal motivation for the key structure assumption in Section \ref{s:intro:results:VVd} from a mathematical viewpoint, and relate it to similar assumptions in the literature. Assuming that the Plancherel theorem applies, we obtain by applying the Fourier transform $\mathcal F f = \widehat f$ in \eqref{E:intro} that
\begin{align} \label{E:F-form}
E (\bmu) 
= \int_{\R^2} \sum_{s,t=1}^S \widehat{ V^{st} } \widehat{ \mu^t } \overline{ \widehat{ \mu^s } } \, d\omega.
\end{align}
A similar expression can be found for $E_n$. Then, to show that $E(\bmu)$ is bounded from below (non-negative in this case), it is sufficient to show that the matrix $(\widehat{ V^{st} } (\omega))_{s,t = 1}^S$ is positive semi-definite for all $\omega \in \R^2$. Since $(\widehat{ V^{st} } (\omega))_{s,t = 1}^S$ is in general not positive semi-definite, the role of $\Vreg$ in the decomposition in \eqref{V:W-form-intro} is to capture the non-positive semi-definite part. The role of the convolution terms is to make it obvious that the remainder is positive semi-definite; indeed,
\begin{equation*}
  \mathcal F ( V^{st} - \Vreg^{st} ) = \sum_{k=1}^K \overline{\widehat{ W_k^s }} \widehat{ W_k^t },
\end{equation*}
which clearly yields a non-negative contribution when inserted in \eqref{E:F-form}. 

Non-negativity or positivity of the Fourier transform of the interaction potential is a recurring assumption in the study of continuum interaction energies such as $E$ or in the study on the convergence of the underlying particle system; see, e.g., 
\cite{GarroniVanMeursPeletierScardia16,
GarroniVanMeursPeletierScardia19DOI,
GeersPeerlingsPeletierScardia13,
KimuraVanMeurs19acc,
vanMeurs18,
MoraRondiScardia19}. Here, we have decided to avoid using the Fourier transform in the assumptions on the interaction potentials to make the application to bounded domains easier. 
\smallskip

\emph{Comparison with \cite{GarroniLeoniPonsiglione10}}.
First we briefly recall the setting and the result of \cite{GarroniLeoniPonsiglione10} in a formal fashion. The energy considered is 
\begin{equation} \label{Ed:GLP10}
  E_\delta (\bx, \beta) = \int_{\Omega_\delta (\bx)} \C \beta : \beta,
\end{equation}
where $\bx$ is again the list of dislocation positions. The dependence of $n$ and $\delta_n$ is reversed such that the atomic length scale $\delta$ is the leading parameter. The domain $\Omega_\delta (\bx) := \Omega \setminus \cup_{i=1}^{n_\delta} B(x_i, \delta)$ is constructed by perforating a given domain $\Omega$ at each dislocation by removing a small ball $B(x_i, \delta)$ around it. The strain field $\beta : \Omega_\delta (\bx) \to \R^{2\times 2}$ is required to be compatible with each dislocation $(x_i, b_i)$ by the nonzero curl condition $\int_{ \partial B(x_i, \delta) } \beta \cdot ds = b_i / n^\delta$. Finally, the elasticity tensor $\C$ and the Frobenius inner product ``$:$" are defined in Section \ref{s:V:edge:aV}, and $E_\delta$ is the elastic energy of the perforated domain $\Omega_\delta (\bx)$.

The main result in \cite{GarroniLeoniPonsiglione10} is the $\Gamma$-convergence of $E_\delta$ as $\delta \to 0$ and $n_\delta \to \infty$ in a topology where $\kappa_{n_\delta}$ (as in \eqref{kappan}) converges to $\kappa$ in the narrow topology, and, roughly, $\beta_\delta \weakto \beta$ in $L^2$. The topology is further restricted by the geometrical constrains
\begin{equation}\label{GLP10:scaling}
  \frac1{ \sqrt{n_\delta} } \gg r_\delta \gg \delta^s \quad \forall \, s \in (0,1),
\qquad r_\delta := \min_{i \neq j} |x_i - x_j|,
\end{equation}
where $r_\delta$ is the minimal separation distance for the dislocations. The $\Gamma$-limit depends on the scaling regime of $n_\delta$ as $\delta \to 0$. There are three scaling regimes; $n_\delta \gg |\log \delta|$, $n_\delta \sim |\log \delta|$ and $n_\delta \ll |\log \delta|$. Here, we focus on the first regime, which is equivalent to the regime $\gamma_n \to 0$ in this paper. In this regime, the $\Gamma$-limit is
\[
  E_\delta (\beta) = \int_{\Omega_\delta (\bx)} \C \beta_s : \beta_s,
\]
where $\kappa = \Curl \beta$, and $\beta_s$ is the symmetric part of $\beta$.
\smallskip

Next we connect the setting and the result of \cite{GarroniLeoniPonsiglione10} with those of this paper. We expect that $\inf_\beta E_\delta(\bx, \beta)$ under the curl condition is an interacting particle energy of the form $E_n(\bx; \bb)$ as in \eqref{En:sum} with an additional term which describes the boundary effects due to the finite domain $\Omega$. This expectation is based on \cite{MoraPeletierScardia17}, where this is made precise in the case where all Burgers vectors $b_i$ are the same, and on Proposition \ref{p:V:CKK}, where we show that $V$ defined in \eqref{V:bibj} can be rewritten as an elastic energy of the form \eqref{Ed:GLP10}. Based on this connection between the discrete dislocation energies, we expect that the $\Gamma$-limits share a similar resemblance, i.e., that $\inf_\beta E(\beta)$ under the curl condition $\kappa = \Curl \beta$ resembles the relaxation of $E$ as obtained in Corollary \ref{c:t}. We have no rigorous prove for this.

Next we focus on the differences between the results in \cite{GarroniLeoniPonsiglione10} and those in this paper. The advantages of the result in \cite{GarroniLeoniPonsiglione10} are that the strain field $\beta$ is treated as a variable and that boundary effects are explicitly included. The advantages of the result in this paper is that no restrictions on the separation distance $r_\delta$ are imposed, that there is no upper bound on $n_\delta$ (\eqref{GLP10:scaling} implies essentially that $n_\delta$ has at most logarithmic growth), and that general regularisations of the dislocation cores are considered. Regarding the last advantage, we show in Section \ref{s:V:edge:perf} that a regularisation based on perforating the domain fits to our admissible class of regularisations, but that the equivalent of $E_\delta$ as in \eqref{Ed:GLP10} does not fit. This poses the question whether there is an alternative to the specific regularisation leading to $E_\delta$ for which no separation distance needs to be imposed.

More generally, the observation that the results in \cite{GarroniLeoniPonsiglione10} and those in this paper have several advantages over one another sparks the question on how both results can be combined to derive a $\Gamma$-limit in a general setting. In particular, keeping the strain field $\beta$ as a parameter and removing restrictions on the separation condition would lead to a rigorous justification of the elastic theory of continuously distributed dislocations (see \cite{Acharya01} and references therein), at least for static scenarios. For a specific physical example (on polygonisation) we refer to \cite[Sec.\ 4.2]{AroraAcharya20}. Since the proofs of both results in \cite{GarroniLeoniPonsiglione10} and in this paper are based on different techniques, the challenge on combining both approaches is beyond the scope of this paper, and left for future research.
\smallskip

\emph{Comparison with \cite{GarroniVanMeursPeletierScardia19DOI}}.
In \cite{GarroniVanMeursPeletierScardia19DOI}, evolutionary convergence is studied of the gradient flow of $E_n$ for edge dislocations with $S=2$ and $\mathcal{B} = \{e_1, -e_1\}$. The limiting gradient flow is that of the $\Gamma$-limit $E$. Evolutionary convergence is proven under the condition $\delta_n^2 \gg 1/\log n$. This condition is much more restrictive than ours in \eqref{deln:regime}, which is due to the dynamical setting. In addition, \cite{GarroniVanMeursPeletierScardia19DOI} provides a class of counterexamples for the convergence of the gradient flow of $E_n$ to that of $E$ under the condition $\delta_n \ll n^2$. Comparing this condition to \eqref{deln:regime}, we observe that there is quite a significant regime in the $(n, \delta)$ parameter space where the static convergence (i.e., $\Gamma$-convergence) holds, but where the evolutionary convergence fails. 

Next we translate this observation into a property of the regularised version of Volterra's model for edge dislocations (equipped with a linear drag law for their motion). During the gradient flow of $E_n$, \cite{GarroniVanMeursPeletierScardia19DOI} shows that dipoles (i.e., pairs of dislocations with opposite Burgers vector whose positions are very close) may impede the dynamics of $\bx$. Yet, the flow field in the gradient flow of $E$ is independent on any dipole density, and thus $\bmu$ cannot be impeded by a dipole density. The result of Theorem \ref{t} implies that an excess of dipoles can only lower the energy $E_n$ by a value which vanishes as $n \to \infty$. Hence, while the dynamical setting needs special treatment of dipoles, the static setting requires no such treatment.
\smallskip

\emph{The intriguing question on small $\delta_n$}.
Other than the {difference} in the scaling regimes of $\delta_n$ where Theorem \ref{t} and the evolutionary convergence in \cite{GarroniVanMeursPeletierScardia19DOI} hold, they have in common that both convergence results do not yield the expected interaction energy $E$ or the gradient flow thereof when $\delta_n$ is too small with respect to $n$. This is in line with a growing set of observations in recent literature (see, e.g., \cite{ChapmanXiangZhu15,vanMeurs18} and
\cite[Chap.~9]{VanMeurs15}) on continuum limits of multiple-species interacting particle systems with singular interactions. It is interesting to note that such interesting phenomena for small $\delta_n$ do not occur in single-species scenarios. Indeed, as mentioned above, it is often possible to prove such limits without the need to regularise the singularity.

Formally, the difficulty with small $\delta_n$ is that the diagonal terms in \eqref{En:sum} (described by $\gamma_n$ defined in \eqref{gamman:bi}) do not vanish as $n \to \infty$. This discrete effect is not captured in our proof of the $\Gamma$-liminf inequality in Theorem \ref{t}, which merely relies on the structure assumption in Section \ref{s:intro:results:VVd}. Hence, the resulting inequality is not sharp enough, and seems to require a renormalisation to recover the contribution of $\gamma_n$. We note that simply removing $\gamma_n$ from $E_n$ does not work, as such energy resembles $\tilde E_n$, for which there is no sufficient lower bound.

Despite this complexity, there are results in the literature where the case of small $\delta_n$ (i.e., $|\log \delta_n| \gtrsim n$) is treated. For instance, \cite{GarroniLeoniPonsiglione10} and \cite{MoraPeletierScardia17} consider this case under the restriction that dislocations remain separated by a separation distance  $r_n$. This results effectively in a further regularisation of $V_{\delta_n}$ at scale $r_n$. In more recent results, however, this separation condition has been removed (see \cite{DeLucaGarroniPonsiglione12} and \cite{Ginster19}) by means of a \emph{ball-construction} in the Ginzburg-Landau spirit. This ball-construction relies on the detailed structure of the corresponding elastic energy on the perforated domain. Hence, for any other kind of regularisation considered in this paper, it remains an open problem to obtain a meaningful limit as $n \to \infty$ of a rescaled or renormalised version of $E_n$ when $\gamma_n$ does not vanish as $n \to \infty$. 
\medskip 
 
The paper is organised as follows. In Section \ref{s:t} we state and prove our main result, Theorem \ref{t}, on the $\Gamma$-convergence of $E_n$ for a class of interaction potentials. In Corollary \ref{c:t} we prove the $\Gamma$-convergence for a different topology on $\kappa_n$ in \eqref{kappan}. In Section \ref{s:V:edge} we show that Theorem \ref{t} applies to the case of edge dislocations, and that several of the regularisations used in the literature (such as mollification and a type of perforation) fit to the assumptions on $V_{\delta_n}$. Preceding this section, in Section \ref{s:Riesz} we show how Theorem \ref{t} applies to other potentials of interest, whose treatment allows for lighter computations.

\section{The $\Gamma$-limit in the general setting}
\label{s:t}

In this section we prove our main $\Gamma$-convergence result, Theorem \ref{t}, for a certain class of potentials. We use the same notation as in Section \ref{s:intro:results:G}, except for the interaction potential between two particles of species $s, t \in \{1, \dots, S\}$, which we denote by $V^{st}$. This covers the case of edge dislocations by setting $V^{st}(x) = V(x; \xi_s,\xi_t)$. Similarly, the regularised interaction potential is denoted as $V^{st}_{\delta}$. Then, the energy for the particle system is given by
\begin{equation*} 
  E_n (\bmu_n) 
  = \sum_{s=1}^S \sum_{t=1}^S \int_{\R^2} \int_{\R^2} V_{\delta_n}^{st} (x-y) \, d \mu_n^t (y) \, d \mu_n^s (x),
\end{equation*} 
which is the analogue of \eqref{En:mun:intro}. Again, we consider $\delta_n \to 0$ as $n \to \infty$, and assume that
\begin{equation} \label{gamman:ss}
  \gamma_n = \frac1{n^2} \sum_{s=1}^S \sum_{i=1}^{n_s} \big| V_{\delta_n}^{ss} (0) \big|
\end{equation}
tends to $0$ as $n \to \infty$. The assumptions on $V^{st}$ and $V^{st}_{\delta}$ are specified in the following.

\paragraph{Assumptions on $V^{st}$ and $V^{st}_{\delta_n}$}
 
Here we translate the properties \eqref{V:W-form-intro} and \eqref{V:W-form-d-intro} of the potentials for edge dislocations in terms of precise assumptions on the general potentials $V^{st}$ and $V_\delta^{st}$.

\begin{ass}[Properties of $V^{st}$] \label{a:V} 
We assume that $V^{st}$ decomposes as
\begin{equation} \label{V:W-form}
  V^{st} (x) = \sum_{k=1}^{\mathsf K} \left( \overline W_k^s \ast W_k^t \right) (x) + \Vreg^{st} (x)
  \quad \text{for a.e.\ } x \in \R^2,
\end{equation}
where $\mathsf K \in \N$ and $\overline W_k^s (x) := W_k^s(-x)$. The potentials $\Vreg^{st}$ and $W_k^s$ are such that, for all $s,t,k$,
\begin{enumerate}[label=(\roman*)]
  \item $\Vreg^{st} \in C (\R^2)$;
  \item $W_k^s \in L^1 (\R^2)$;
  \item $V^{st} \in C (\R^2 \setminus \{0\})$ is even.
\end{enumerate}
\end{ass}

Given a potential $V^{st}$ which satisfies Assumption \ref{a:V}, we impose the following (minimal) requirements on the regularised potential $V_{\delta}^{st}$.

\begin{ass}[Properties of $V_\delta^{st}$] \label{a:Vd} 
For any $s,t,k$, let $V^{st}$, $\Vreg^{st}$ and $W_k^s$ be as in Assumption \ref{a:V}. Then, for any $\delta > 0$, the potential $V_\delta^{st}$ can be decomposed as
\begin{equation} \label{V:W-form-d}
  V_\delta^{st} (x) = \sum_{k=1}^{\mathsf K} \left( \overline W_{\delta, k}^s * W_{\delta, k}^t \right)(x) + \Vreg^{\delta, st} (x)
  \quad \text{for a.e.\ } x \in \R^2,
\end{equation}
where $V_\delta^{st}$, $\Vreg^{\delta, st}$ and $W_{\delta, k}^s$ are such that:
\begin{enumerate}[label=(\roman*)]
\item \label{a:Vd:reg:even} (Regularity). For all $\delta > 0$, the function $V_\delta^{st} \in C (\R^{2})$ is even, $W_{\delta, k}^s \in L^1(\R^2)$ and $\Vreg^{\delta, st} \in C (\R^{2})$;
\item \label{a:Vd:uf:conv} (Uniform convergence in any annulus). \\ For all $0 < \varepsilon < 1$, there holds $V_\delta^{st} \to V^{st}$ in $C \big( B(0, 1/\varepsilon) \setminus  B(0, \varepsilon) \big)$ as $\delta \to 0$;
\item \label{a:Vd:W-form} (Convergence of $\Vreg^{\delta, st}$ and $W_{\delta, k}^s$). \\
(a) \ $W_{\delta, k}^{s} * \varphi \to W_{k}^s * \varphi$ in $C_{\text{loc}} (\R^2)$ as $\delta\to 0$ for any $\varphi \in C_c^\infty (\R^2)$, \smallskip \\ 
(b) \ $\Vreg^{\delta, st} \to \Vreg^{st}$ in $C_{\text{loc}} (\R^2)$ as $\delta\to 0$; 
\item \label{a:Vd:dom} (Dominator). There exists a radially symmetric $U^{st}$ in $L^1_{\text{loc}}(\R^2) \cap C(\R^2 \setminus \{0\})$, non-increasing in the radial direction, such that $U^{st}(x) \geq \sup_{0 < \delta < 1} |V_\delta^{st}(x)|$ for all $x\in B(0,1)$.
\end{enumerate}
\end{ass}

\medskip
 
Next we comment on the motivation for the four conditions in Assumption \ref{a:Vd}. Condition \ref{a:Vd:reg:even} ensures sufficient regularity, \ref{a:Vd:uf:conv} ensures consistency with $V^{st}$, and \ref{a:Vd:W-form} assumes that the components of the decomposition are consistent with those in \eqref{V:W-form}. Condition \ref{a:Vd:dom} is imposed for technical reasons; it provides a uniform upper bound on $V_\delta^{st}$ close to $0$. We rely on this condition when constructing a recovery sequence in the proof of Theorem \ref{t}. Even when $V_\delta$ does not satisfy this condition, or when this condition is difficult to prove, Theorem \ref{t} may still hold. In such case, only the construction of the recovery sequence needs to be redone.

To illustrate that Assumptions \ref{a:V} and \ref{a:Vd} cover several potentials of interest, including that of edge dislocations, we show in Sections \ref{s:Riesz} and \ref{s:V:edge} how such potentials can be shown to satisfy these assumptions. 

\paragraph{Rewriting the energy}

As mentioned in the introduction, the decomposition of $V_\delta^{st}$ in \eqref{V:W-form-d} provides a uniform lower bound on the energy, at least when $\bmu_n$ is compactly supported. This can be seen by simply rewriting the energy as follows:
\begin{align} \notag
  E_n (\bmu_n) 
  &= \sum_{s,t=1}^S \int_{\R^2} \int_{\R^2} V_{\delta_n}^{st} (x-y) \, d \mu_n^t (y) \, d \mu_n^s (x)
  = \sum_{s,t=1}^S \int_{\R^2} \big( V_{\delta_n}^{st} * \mu_n^t \big) \, d \mu_n^s \\\notag
  &= \sum_{s,t=1}^S \bigg[ \int_{\R^2} \big( \Vreg^{\delta_n,st} * \mu_n^t \big) \, d \mu_n^s + \sum_{k=1}^{\mathsf K} \int_{\R^2} \big( \overline W_{\delta_n, k}^s * (W_{\delta_n, k}^t * \mu_n^t ) \big) \, d \mu_n^s  \bigg] \\\label{En:shortt}
  &= \sum_{s,t=1}^S \int_{\R^2} \big( \Vreg^{\delta_n, st} * \mu_n^t \big) \, d \mu_n^s + \sum_{k=1}^{\mathsf K} \bigg\| \sum_{s=1}^S W_{\delta_n, k}^s * \mu_n^s \bigg\|_{L^2(\R^2)}^2.
\end{align}
Indeed, the second term in the right-hand side is non-negative, and the first term is bounded from below because of Assumption \ref{a:Vd}\ref{a:Vd:uf:conv} and the imposed bound on the support of $\bmu_n$. To ensure that the support of $\bmu_n$ remains uniformly bounded in our $\Gamma$-convergence result, we modify the narrow topology by requiring the support to be bounded.

\paragraph{The modified narrow topology} 

We define the modified narrow topology on $\mathcal P ( \R^2 \times \{1, \ldots, S\} )$ as follows: 
\begin{equation} \label{cTop}
  \bmu_n \xweakto c \bmu \text{ as } n \to \infty 
  \quad \Longleftrightarrow \quad
  \left\{ \begin{aligned}
        &\mu_n^s \weakto \mu^s \text{ as $n \to \infty$ for all $s$, and} \\
        &\bigcup_{n = 1}^\infty \bigcup_{s=1}^S \supp \mu_n^s \: \text{ is bounded.} 
  \end{aligned} \right.
\end{equation}
Here, $\mu_n^s \weakto \mu^s$ denotes the narrow convergence of $\mu_n^s$ to $\mu^s$ in $\mathcal M_+(\R^2)$. It is defined as follows:
\begin{equation*}
  \mu_n^s \weakto \mu^s \text{ as } n \to \infty 
  \quad \Longleftrightarrow \quad
  \forall \, \varphi \in C_b(\R^2) : 
    \int_{\R^2} \varphi(x) \, d\mu_n^s(x) \xto{n \to \infty} \int_{\R^2} \varphi(x) \, d\mu^s(x).
\end{equation*}

\paragraph{The main $\Gamma$-convergence result}

Before stating the main result, we give the precise definitions of $E_n$ and $E$:
\begin{subequations} \label{En:def}
\begin{align} \label{En:dom}
  D ( E_n )  
  &:= \big\{ \bmu \in \mathcal P ( \R^2 \times \{1, \ldots, S\} ) : \exists \, x_i^s \in \R^2 : \bmu \text{ satisfies \eqref{muns}} \big\}, \\\label{En:mun} 
  E_n (\bmu_n)
  &= \sum_{s,t=1}^S \int_{\R^2} \big( \Vreg^{\delta_n, st} * \mu_n^t \big) \, d \mu_n^s 
    + \sum_{k=1}^{\mathsf K} \bigg\| \sum_{s=1}^S W_{\delta_n, k}^s * \mu_n^s \bigg\|_{L^2(\R^2)}^2,
\end{align}
\end{subequations}
\begin{subequations} \label{E:def}
\begin{align} \label{E:dom}
  D(E) &:= \big\{ \bmu \in \mathcal P (\R^2 \times \{1, \ldots, S\}) : \supp \bmu \text{ bounded} \big\}, \\\label{E:short}
  E (\bmu) 
  &= \sum_{s,t=1}^S \int_{\R^2} \big( \Vreg^{st} * \mu_n^t \big) \, d \mu_n^s + \sum_{k=1}^{\mathsf K} \bigg\| \sum_{s=1}^S W_k^s * \mu^s \bigg\|_{L^2(\R^2)}^2.
\end{align} 
\end{subequations}
The expression for $E_n$ is taken from \eqref{En:shortt}. The computations in \eqref{En:shortt} and \eqref{En:mun:intro} show how it can be rewritten explicitly as a particle interaction energy. Regarding $E$, we note that it is well-defined on $D(E)$ with values in $(-\infty, \infty]$. Indeed, the first term is finite because $\supp \bmu$ is bounded. In the second term, $W_k^s * \mu^s$ can be regarded as a distribution, for which the $L^2$-norm is well-defined with values in $[0, \infty]$. In particular, if $\bmu$ is absolutely continuous with density in $L^2$, then we can follow the computation in \eqref{En:shortt} in reverse order to rewrite $E(\bmu)$ as in \eqref{E:intro}.

\begin{thm}[$\Gamma$-convergence] \label{t} 
Let $S \in \N{}$ be positive. For any $s,t \in \{1,\ldots,S\}$, let $V^{st}$ satisfy Assumption \ref{a:V} for some $\mathsf K$, $\Vreg^{st}$ and $W_k^s$. Let $\delta_n \to 0$ as $n \to \infty$, and let $V_{\delta_n}^{st}$ for each integer $n \geq 1$ satisfy Assumption \ref{a:Vd} for some $\Vreg^{\delta_n, st}$ and $W_{\delta_n, k}^s$. Let $E_n$ and $E$ be the energies defined respectively in \eqref{En:def} and \eqref{E:def}. If $\gamma_n \to 0$ as $n \to \infty$ (see \eqref{gamman:ss}), then $E_n$ $\Gamma$-converges to $E$ with respect to the modified narrow topology \eqref{cTop}, i.e.,
\begin{subequations}
\label{tf:Gconv}
\begin{alignat}2
\label{tf:Glimf}
&\forall \, \bmu \in D(E) \ \forall \, \bmu_n \in D(E_n), \, \bmu_n \xweakto c \bmu : 
& \liminf_{n\to\infty}  E_n (\bmu_n) 
&\geq E(\bmu), 
\\
&\forall \, \bmu \in D(E) \ \exists \, \bmu_n \in D(E_n), \, \bmu_n \xweakto c \bmu : 
& \ \limsup_{n\to\infty} E_n (\bmu_n) 
&\leq E(\bmu).
\label{tf:Glimp}
\end{alignat}
\end{subequations} 
\end{thm}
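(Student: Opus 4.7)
The whole proof hinges on the representation of the energy in the form \eqref{En:shortt}, i.e., a continuous ``remainder'' part involving $\Vreg^{\delta_n,st}$ together with a non-negative quadratic-convolution part in $L^2$. I would treat the two parts separately for both inequalities, using on the one hand the continuity/local uniform convergence assumptions in \ref{a:Vd:uf:conv}--\ref{a:Vd:W-form} and on the other hand the lower semicontinuity of the $L^2$-norm along distributional convergence. The modified narrow topology gives us a common compact set $K\subset\R^2$ containing $\supp\mu_n^s$ (and $\supp\mu^s$) for all $n,s$, which is the key fact that makes the continuous pieces converge and the convolution pieces amenable to test against $C_c^\infty$ functions.

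\textbf{Liminf inequality.} Let $\bmu_n\xweakto{c}\bmu$ and assume $\liminf_n E_n(\bmu_n)<\infty$ (otherwise trivial). For the first term in \eqref{En:mun}, I would write
\[
  \int \bigl(\Vreg^{\delta_n,st}\!*\!\mu_n^t\bigr) d\mu_n^s
  = \iint \Vreg^{\delta_n,st}(x-y)\,d\mu_n^t(y)\,d\mu_n^s(x),
\]
split the integrand as $\Vreg^{st}+(\Vreg^{\delta_n,st}-\Vreg^{st})$, use Assumption~\ref{a:Vd}\ref{a:Vd:W-form}(b) together with compactness of $\supp\mu_n^s \subset K$ to absorb the second piece into a sup-norm estimate on $K-K$, and use the narrow convergence $\mu_n^t\otimes\mu_n^s\weakto\mu^t\otimes\mu^s$ on the bounded continuous test $\Vreg^{st}$ to get the continuous convergence. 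For the second term, I would show that, for every $k$ and $\varphi\in C_c^\infty(\R^2)$,
\[
 \bigl\langle W_{\delta_n,k}^s*\mu_n^s,\varphi\bigr\rangle
 = \bigl\langle \mu_n^s,\overline{W_{\delta_n,k}^s}*\varphi\bigr\rangle
 \xto{n\to\infty}
 \bigl\langle \mu^s,\overline{W_k^s}*\varphi\bigr\rangle,
\]
since $\overline{W_{\delta_n,k}^s}*\varphi\to\overline{W_k^s}*\varphi$ in $C_\mathrm{loc}(\R^2)$ by \ref{a:Vd:W-form}(a), and $\mu_n^s$ converges narrowly with uniformly bounded support. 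Summing over $s$ shows $\sum_s W_{\delta_n,k}^s*\mu_n^s \to \sum_s W_k^s*\mu^s$ in $\mathcal D'(\R^2)$. Combined with the standard fact that $\|\cdot\|_{L^2}$ is lower semicontinuous with respect to distributional convergence (a distributional limit of a bounded family in $L^2$ lies in $L^2$ and the norm is lsc), this yields the liminf inequality for each of the $\mathsf K$ squared $L^2$-norms.

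\textbf{Limsup inequality.} I may assume $E(\bmu)<\infty$, since otherwise any empirical approximation (obtained by sampling on a grid, for instance) is a recovery sequence. The construction would proceed in two steps, closed by Attouch diagonalization. First, I would mollify: set $\mu^{s,\e}:=\rho_\e*\mu^s$ with a standard mollifier $\rho_\e$, getting a smooth density $u^{s,\e}$ compactly supported in a slight enlargement of $\supp\bmu$. Then $\bmu^\e\xweakto{c}\bmu$, and I would show $E(\bmu^\e)\to E(\bmu)$: continuity of $\Vreg^{st}$ handles the first term, and for the second term the key identity $W_k^s*\mu^{s,\e}=\rho_\e*(W_k^s*\mu^s)$ reduces the claim to the classical fact that mollification converges in $L^2$-norm (when $\sum_s W_k^s*\mu^s\in L^2$, as guaranteed by $E(\bmu)<\infty$). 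Second, for each smooth $\bmu^\e$ with density $u^{s,\e}$ supported in a box $Q$, I would build empirical measures $\bmu_n^\e\in D(E_n)$ by partitioning $Q$ into cells of side $h_n=n^{-1/2}$ and placing in each cell $C_\alpha$ exactly $n_s^\alpha\approx n\int_{C_\alpha}u^{s,\e}$ atoms, adjusting rounding and using slight offsets to make the $S$ species have pairwise disjoint atoms. Then $\bmu_n^\e\xweakto{c}\bmu^\e$ and the direct expression
\[
  E_n(\bmu_n^\e)
  = \frac1{n^2}\sum_{s,t,i,j} V_{\delta_n}^{st}\!\bigl(x_i^s-x_j^t\bigr)
\]
is split into the self-contribution $i=j,s=t$, which is bounded by $\gamma_n\to0$, a ``far'' part where $|x_i^s-x_j^t|\geq r_n$, and a ``near'' part where $|x_i^s-x_j^t|<r_n$, for a judicious $r_n\to0$ with $r_n/h_n\to\infty$. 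On the far part, Assumption~\ref{a:Vd}\ref{a:Vd:uf:conv} gives uniform convergence of $V_{\delta_n}^{st}$ to $V^{st}$ on $B(0,\mathrm{diam}\,Q)\setminus B(0,r_n)$, and a Riemann-sum argument drives this piece to $E(\bmu^\e)$. On the near part, Assumption~\ref{a:Vd}\ref{a:Vd:dom} bounds $|V_{\delta_n}^{st}|\leq U^{st}$, and a volume-counting estimate based on the grid structure yields
\[
  \frac1{n^2}\!\!\!\sum_{|x_i^s-x_j^t|<r_n}\!\! U^{st}\bigl(x_i^s-x_j^t\bigr)
  \;\lesssim\; \int_{B(0,r_n)} U^{st}(y)\,dy \;\xto{r_n\to 0}\; 0,
\]
by local integrability of $U^{st}$. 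A standard diagonal extraction $\e=\e(n)\to0$ concludes the proof.

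\textbf{Main obstacle.} The delicate step is the near-diagonal estimate in the limsup: it is here that all four conditions of Assumption~\ref{a:Vd} are used simultaneously (the decomposition to get a meaningful construction, the dominator $U^{st}$ to tame the singularity, the local integrability of $U^{st}$ to send the singular part to zero, and $\gamma_n\to0$ to kill the self-interactions). The rest is a fairly routine interplay between narrow convergence on a compact set, uniform convergence of continuous kernels on annuli, and distributional continuity of convolution against compactly-supported mollifiers. The liminf is conceptually cleaner because the convolution/$L^2$-decomposition in \eqref{En:mun} turns the singular part into a manifestly convex functional, whose lower semicontinuity along distributional convergence is immediate.
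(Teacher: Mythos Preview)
Your proposal is correct and follows essentially the same route as the paper: split $E_n$ into the continuous $\Vreg$-part (handled by narrow convergence of the product measures together with uniform convergence of $\Vreg^{\delta_n,st}\to\Vreg^{st}$ on compacts) and the nonnegative $L^2$-convolution part (liminf via distributional convergence of $\sum_s W_{\delta_n,k}^s*\mu_n^s$ and weak lower semicontinuity of the $L^2$-norm; limsup via mollification to $L^\infty$ densities, then lattice discretisation with species-dependent sublattices and a near-diagonal estimate using the dominator $U^{st}$ and $\gamma_n\to 0$). The only point where the paper differs slightly is that it uses a \emph{fixed} diagonal thickness $\varepsilon$ and sends $\varepsilon\to 0$ at the very end, whereas you let the threshold $r_n$ shrink with $n$; since Assumption~\ref{a:Vd}\ref{a:Vd:uf:conv} gives uniform convergence only on fixed annuli, your far-part step as written is not quite justified, but this is repaired trivially by inserting an intermediate fixed $\varepsilon$ (and bounding the slab $r_n\le|\cdot|<\varepsilon$ with the dominator), after which the two arguments coincide.
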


\begin{proof}
For convenience, we set $\widetilde V^{st} : \R^2 \times \R^2 \to \overline{\R}$ as $\widetilde V^{st} (x,y) := V^{st} (x-y)$, and use the same notation for other potentials. We write $E_n = G_n + F_n$ with
\begin{equation} \label{En:short}
  G_n (\bmu_n)
  := \sum_{s,t=1}^S \iint_{\R^2 \times \R^2} \Vregt^{\delta_n, st} \, d( \mu_n^s \otimes \mu_n^t ) 
  \quad \text{and} \quad
  F_n (\bmu_n)
  := \sum_{k=1}^{\mathsf K} \bigg\| \sum_{s=1}^S W_{\delta_n, k}^s * \mu_n^s \bigg\|_{L^2(\R^2)}^2.
\end{equation}
Analogously, we write $E = G+F$.

First we prove that $G_n$ is a continuous perturbation with respect to the modified narrow topology, i.e., for all $\bmu \in D(E)$ and all $\bmu_n \in D(E_n)$ with $\bmu_n \xweakto c \bmu$, it holds that $G_n (\bmu_n) \to G (\bmu)$ as $n \to \infty$.
With this aim, we fix $s,t$, and let $\bmu$ and $\bmu_n$ be arbitrary such that $\bmu_n \xweakto c \bmu$ as $n \to \infty$. Let $K \subset \R^2$ be the compact set which satisfies $\supp \bmu_n \subset K$ for all $n \in \Np{}$. By Assumption \ref{a:Vd}\ref{a:Vd:W-form}$(b)$ it holds that $\Vregt^{\delta_n, st} \to \Vregt^{st}$ uniformly in $K \times K$, as $n \to \infty$. Since $\mu_n^s \weakto  \mu^s$ and $\mu_n^t \weakto  \mu^t$ in $\mathcal M_+ (K)$ as $n \to \infty$, it also holds that $\mu_n^s \otimes \mu_n^t \weakto  \mu^s \otimes \mu^t$ in $\mathcal M_+ (K \times K)$, and thus
\begin{equation*}
  \lim_{n \to \infty} G_n(\bmu_n)
  = \lim_{n \to \infty} \iint_{\R^2 \times \R^2} \Vregt^{\delta_n, st} \, d( \mu_n^s \otimes \mu_n^t )
  = \iint_{\R^2 \times \R^2} \Vregt^{st} \, d( \mu^s \otimes \mu^t )
  = G(\bmu).
\end{equation*}
Hence, it remains to show that $F_n$ $\Gamma$-converges to $F$ with respect to the modified narrow topology.
\smallskip

\emph{The liminf inequality \eqref{tf:Glimf} for $F_n$}. Let $\bmu_n \xweakto c \bmu$ be such that $F_n (\bmu_n)$ is bounded. Then, from \eqref{En:short} we obtain that $\sum_{s=1}^S W_{\delta_n, k}^s * \mu_n^s$ is bounded in $L^2 (\R^2)$ for all $k$. Hence, it converges along a subsequence (not relabelled) weakly in $L^2 (\R^2)$ to some $f_k$. We characterise $f_k$ by computing the distributional limit of $\sum_{s=1}^S W_{\delta_n, k}^s * \mu_n^s$ as $n \to \infty$. For any $\varphi \in C_c^\infty (\R^2)$, it follows from Assumption \ref{a:Vd}\ref{a:Vd:W-form}$(a)$ that
\begin{align*}
  \left\langle \sum_{s=1}^S W_{\delta_n, k}^s * \mu_n^s, \varphi \right\rangle
  = \sum_{s=1}^S \left\langle \mu_n^s, \overline W_{\delta_n, k}^s * \varphi \right\rangle
  \xto{n \to \infty} \sum_{s=1}^S \left\langle \mu^s, \overline W_k^s * \varphi \right\rangle
  = \left\langle \sum_{s=1}^S W_k^s *  \mu^s, \varphi \right\rangle,
\end{align*}
where $\langle \cdot \, , \cdot \rangle$ is the duality pairing consistent with the inner product in $L^2 (\R^2)$.
This implies that $f_k = \sum_{s=1}^S W_k^s * \mu^s$, which is independent of the choice of the subsequence. We conclude that
\begin{align*}
  \liminf_{n \to \infty} F_n (\bmu_n) 
  \geq \sum_{k=1}^{\mathsf K} \liminf_{n \to \infty} \bigg\| \sum_{s=1}^S W_{\delta_n, k}^s * \mu_n^s \bigg\|_{L^2(\R^2)}^2
  \geq \sum_{k=1}^{\mathsf K} \bigg\| \sum_{s=1}^S W_{k}^s * \mu^s \bigg\|_{L^2(\R^2)}^2 = F(\bmu). 
\end{align*}
\smallskip

\emph{The limsup inequality \eqref{tf:Glimp} for $F_n$}. We first show, by a density argument, that it is enough to establish \eqref{tf:Glimp} for $\bmu \in L^\infty (\R^2 \times \{1, \ldots,S\}) \cap D(E)$. With this aim, let $\bmu \in D(E)$, and set $\mu_\varepsilon^s := \eta_\varepsilon * \mu^s$ with $\eta_\varepsilon$ the usual mollifier. Clearly, $\bmu_\varepsilon \in C_c^\infty (\R^2 \times \{1, \ldots,S\}) \cap D(E)$ and $\bmu_\varepsilon \xweakto c \bmu$ as $\e \to 0$. Since $F(\bmu)$ is finite, we have $\sum_{s=1}^S W_k^s * \mu^s \in L^2(\R^2)$. Hence
\begin{align*} 
  \sum_{s=1}^S W_k^s * \mu_\varepsilon^s 
  = \eta_\varepsilon * \bigg( \sum_{s=1}^S W_k^s * \mu^s \bigg) 
  \xrightarrow{\varepsilon \to 0} \sum_{s=1}^S W_k^s * \mu^s 
  \quad \text{in } L^2 (\R^2)
\end{align*}
for every $k$, and thus $F (\bmu_\varepsilon ) \to F (\bmu)$ as $\e \to 0$. 

Next we fix any $\bmu \in L^\infty (\R^2 \times \{1, \ldots,S\}) \cap D(E)$ and construct a recovery sequence $\bmu_n \in D(E_n)$ for it by `discretising' the density $\bmu$. We do this by putting the particles $x_i^s$ on a square lattice to guarantee their separation.  

As a first step, we define for any $n \geq 1$ the lattice $\Lambda_n := r_n \Z^2$ with
\begin{equation*}
  r_n := \frac 1 { \lceil \sqrt S \rceil \sqrt{ n \| \bmu \|_\infty } }
\end{equation*}
and $S$ coarser sub-lattices thereof given by $\Lambda_n^s := \lceil \sqrt S \rceil \Lambda_n + \{\ell_s r_n\}$ for $s = 1, \ldots, S$ for some $\ell_s \in \{0, 1, \ldots, \lceil \sqrt S \rceil - 1 \}^2$, where $\ell_1, \ldots , \ell_S$ are all different. We note that $\Lambda_n^s$ and $\Lambda_n^t$ are disjoint whenever $s \neq t$. 

Then, for each species $s$, we approximate $\mu^s$ by choosing $n_s$ points $x_i^s \in \Lambda_n^s$ such that 
\begin{itemize}
  \item $\dist (x_i^s, \supp \mu^s) \leq 1/\| \bmu \|_\infty$ for all $i$,
  \item $x_i^s \neq x_j^s$ for all $i \neq j$,
\end{itemize}
and such that the corresponding measure $\bmu_n$ defined in \eqref{muns} satisfies $\bmu_n \in D(E_n)$ (i.e., $\sum_{s=1}^S n_s = n$) and $\bmu_n \xweakto c \bmu$ as $n \to \infty$. Then, the particle positions satisfy the following separation condition: 
\begin{equation} \label{for:pf:limp:Enpm:1}
  \forall \, n \geq 1: 
  \min \big\{ \big| x_i^s - x_j^t \big| : i \neq j \text{ or } s \neq t \big\}
  \geq r_n.
\end{equation}
In addition, it is easy to check that $\supp \bmu_n \cup \supp \bmu \subset K$ for some compact set $K \subset \R^2$ independent of $n$.

Finally, we show that $\bmu_n$ is a recovery sequence for $\bmu$, i.e.,  $E_n(\bmu_n) \to E (\bmu)$ as $n\to \infty$. Since $\bmu \in L^\infty$, we recall that $E (\bmu)$ can be recast as in \eqref{E:intro}, and thus it suffices to show that
\begin{equation} \label{fp:limp:red}
  \iint_{\R^2 \times \R^2} \widetilde V_{\delta_n}^{st} \, d( \mu_n^s \otimes \mu_n^t )
  \xto{n \to \infty}
  \iint_{\R^2 \times \R^2} \widetilde V^{st} \, d( \mu^s \otimes \mu^t )
  \quad \text{for all } s, t \in \{1, \ldots, S\}.
\end{equation}

To prove \eqref{fp:limp:red}, we take $0 < \e < 1$ arbitrary, and define the open `thick diagonal' in $\R^2 \times \R^2$ as
\begin{equation*}
  \Delta_\e := \{ (x,y) \in \R^2 \times \R^2 : |x - y| < \e \}.
\end{equation*}
Since $\bmu$ is absolutely continuous with bounded density, $\mu_n^s \otimes \mu_n^t \xweakto c \mu^s \otimes \mu^t$ on $\mathcal M_+ (\Delta_\e^c)$ as $n \to \infty$. Then, by Assumption \ref{a:Vd}\ref{a:Vd:uf:conv} we have that 
\begin{equation*} 
  \iint_{\Delta_\e^c} \widetilde V_{\delta_n}^{st} \, d( \mu_n^s \otimes \mu_n^t )
  \xto{n \to \infty}
  \iint_{\Delta_\e^c} \widetilde V^{st} \, d( \mu^s \otimes \mu^t )
  \quad \text{for all } s, t \in \{1, \ldots, S\}.
\end{equation*}
To conclude \eqref{fp:limp:red}, it suffices to show that
\begin{equation} \label{fp:limp:m-est}
  \bigg| \iint_{\Delta_\e} \widetilde V^{st} \, d( \mu^s \otimes \mu^t ) \bigg|
  + \bigg| \iint_{\Delta_\e} \widetilde V_{\delta_n}^{st} \, d( \mu_n^s \otimes \mu_n^t ) \bigg|
  \leq c_\e
  \quad \text{for all } s, t \in \{1, \ldots, S\},
\end{equation}
where the constant $c_\e > 0$ is independent of $n$, $s$ and $t$, and $c_\e  = o(1)$ as $\e \to 0$. 

For the first integral in \eqref{fp:limp:m-est}, we estimate
\begin{multline*}
  \bigg| \iint_{\Delta_\e} \widetilde V^{st} \, d( \mu^s \otimes \mu^t ) \bigg|
  \leq \int_{\R^2} \int_{B(x,\e)} \big| V^{st}(x-y) \big| \, d \mu^t(y) \, d \mu^s(x) \\
  \leq \mu^s(\R^2) \| \mu^t \|_\infty \int_{B(0,\e)} |V^{st}|
  \leq \| \bmu \|_\infty \int_{B(0,\e)} |V^{st}|,
\end{multline*}
which is $o(1)$ as $\e \to 0$ since $V^{st} \in L^1_{\text{loc}} (\R^2)$. 

To estimate the second integral in \eqref{fp:limp:m-est}, we first treat the case $s \neq t$. Then, similar to the computation above, we obtain from Assumptions \ref{a:Vd}\ref{a:Vd:dom} that
\begin{multline} \label{fp:est:4}
  \bigg| \iint_{\Delta_\e} \widetilde V_{\delta_n}^{st} \, d( \mu_n^s \otimes \mu_n^t ) \bigg|
  \leq \int_{\R^2} \int_{B(x,\e)} \big| V_{\delta_n}^{st}(x-y) \big| \, d \mu_n^t(y) \, d \mu_n^s(x) \\
  \leq \frac1{n^2} \sum_{i=1}^{n_s} \sum_{| x_i^s - x_j^t | < \varepsilon} U^{st} (x_i^s - x_j^t) =: \beta_\e^n.
\end{multline}
We interpret $\beta_\e^n$ as a discretization of $\beta_\e := \iint_{\Delta_\e} \widetilde U^{st} \, d( \mu^s \otimes \mu^t )$, which is $o(1)$ as $\e \to 0$ and independent of $n$. However, $\beta_\e^n$ may be larger than $\beta_\e$ because of the singularity of $U^{st}$. Yet, by the construction of $\bmu_n$ (see, e.g., $\Lambda_n$, $\Lambda_n^s$ and $r_n$ defined above), it is not difficult to construct an $n$-independent constant $C > 0$ such that $\beta_\e^n \leq C \beta_\e$. We finish the proof of \eqref{fp:limp:red} for $s \neq t$ with a sketch of a possible construction for this constant $C$.

First, by assuming that all lattice sites of $\Lambda_n^t$ are occupied, we obtain
\begin{equation} \label{pf:m:est}
  \beta_\e^n
  \leq \frac1{n^2} \sum_{i=1}^{n_s} \sum_{y \in \Lambda_n^t \cap B (x_i^s, \varepsilon) } U^{st} (x_i^s - y)
  = \frac1{n} \sum_{y \in \Lambda_n^t \cap B (x_1^s, \varepsilon) }  U^{st} (x_1^s - y),
\end{equation}
where we have used that $\Lambda_n^s$ is periodic for any $s$. Then, we order the points in $\Lambda := \Lambda_n^t \cap B (x_1^s, \varepsilon) = \{y_1, y_2, \ldots, y_K\}$ by their distance from $x_1^s$, starting with the closest. At the same time, we tile $B (x_1^s, \varepsilon)$ with annuli $A_1, A_2, \ldots, A_K$ centred at $x_1^s$ with fixed area $a/n > 0$ such that $A_k$ is enclosed by $A_{k+1}$ and $y_k \notin \cup_{\ell=1}^k \overline A_k$.

Next we sketch the argument that such $a > 0$ can be constructed independently of $n$ and $\e$. For the first four points $y_1, \dots, y_4$, we obtain from   \eqref{for:pf:limp:Enpm:1} that $|x_1^s - y_k| \geq r_n$, and thus it is sufficient to have that $\cup_{\ell=1}^4 A_k \subset B(x_1^s, r_n)$. This yields the following condition on $a$:
\begin{equation*}
   4 \frac an = \sum_{k=1}^4 |A_k| \leq \pi r_n^2 = \frac\pi{ \lceil \sqrt S \rceil^2 \| \bmu \|_\infty n }.
 \end{equation*} 
 For the next 12 points $y_5, \dots, y_{16}$, we have by construction that $|x_1^s - y_k| \geq \lceil \sqrt S \rceil r_n$. Hence, it is sufficient to have that $\cup_{\ell=1}^{16} A_k \subset B(x_1^s, \lceil \sqrt S \rceil r_n)$. This yields the following condition on $a$:
\begin{equation*}
   16 \frac an \leq \pi \lceil \sqrt S \rceil^2 r_n^2 = \frac\pi{ \| \bmu \|_\infty n }.
 \end{equation*} 
Continuing this construction for points $y_k$ that are at least 
$$2 \lceil \sqrt S \rceil r_n, \, 3 \lceil \sqrt S \rceil r_n, \dots$$ 
far away from $x_1^s$, we obtain the sufficient condition
$
   a \leq  c / \| \bmu \|_\infty
$
for some $c > 0$ independent of $\e$ and $n$.

Finally, with $a$ and $A_k$ as constructed above, we use that $U^{st}$ is decreasing in the radial direction to continue \eqref{pf:m:est} by 
\begin{multline*}
  \beta_\e^n 
  \leq \frac1{n} \sum_{k=1}^K  U^{st} (x_1^s - y_k)
  \leq \frac1{n} \sum_{k=1}^K \frac1{|A_k|} \int_{A_k} U^{st} (x_1^s - y) \, dy \\
  = \frac1a \sum_{k=1}^K \int_{A_k} U^{st} (x_1^s - y) \, dy 
  \leq \frac1a \int_{B (0, \varepsilon)} U^{st}
  = \frac{\beta_\e}a.
\end{multline*}
This completes the proof of \eqref{fp:limp:m-est} for the case $s \neq t$.

The case $s = t$ can be treated with a minor adjustment. Indeed, in \eqref{fp:est:4} the self-interaction of $x_i^s$ needs to be separated from the summation before estimating $V_{\delta_n}^{ss}$ by $U^{ss}$. This yields
\begin{equation*} 
  \bigg| \iint_{\Delta_\e} \widetilde V_{\delta_n}^{ss} \, d( \mu_n^s \otimes \mu_n^s ) \bigg|
  \leq \frac1n \big| V_{\delta_n}^{ss}(0) \big| +  \frac1{n^2} \sum_{i=1}^{n_s} \sum_{0 < | x_i^s - x_j^s | < \varepsilon} U^{ss} (x_i^s - x_j^s).
\end{equation*}
The second term can be treated analogously to the case $s \neq t$; the first term is bounded by $\gamma_n$, which is assumed to vanish as $n \to \infty$.
\end{proof}


%


\paragraph{$\Gamma$-convergence in a weaker topology}
As mentioned in the introduction, for the case of edge dislocations, it is physically more relevant to prove a $\Gamma$-convergence result for the topology on $\kappa_n$ as defined in \eqref{kappan}. We do this here for a general potential $V^{st}$ satisfying Assumption \ref{a:V}, and use the set of Burgers vectors $\mathcal B = \{\xi_1, \ldots, \xi_S\}$ only to define the topology in \eqref{kappan}.

To apply Theorem \ref{t} to $E_n$ when written as a function of $\kappa_n$, it is convenient to be able to reconstruct $\bmu_n$ uniquely from $\kappa_n$. A sufficient condition for this reconstruction is that no two dislocations are at the same position. Hence, we set
\begin{equation} \label{En:dom:circ}
  D_\circ (E_n) := \Big\{ \kappa_n = \frac1n \sum_{s=1}^S \sum_{i=1}^{n_s} \xi_s \delta_{x_i^s} : \min_{(s,i) \neq (t,j)} |x_i^s - x_j^t | > 0 \Big\}.
\end{equation}
Then, on $D_\circ (E_n)$, $E_n$ can be defined as in \eqref{En:mun} as a function of $\kappa_n$.

In the continuum setting it is not always possible to reconstruct $\bmu$ uniquely from 
\begin{equation} \label{kappa}
   \kappa = \sum_{s=1}^S \xi_s \mu^s.
\end{equation}
Indeed, if $\mathcal B$ is a set of linearly dependent vectors, then $|\kappa|(\R^2)$ can attain any value in $[0,1]$. In particular, if $|\kappa|(\R^2) < 1$, then there is no unique $\bmu$ for which \eqref{kappa} holds. For $\kappa \in \mathcal M (\R^2; \R^2)$, let
\begin{equation*}
  \mathcal A(\kappa)
  := \bigg\{ \bmu \in D(E) : \kappa = \sum_{s=1}^S \xi_s \mu^s \text{ and }  \supp \bmu \subset \{0\} \cup \supp \kappa  \bigg\}
\end{equation*}
be the related set of compatible $\bmu$. The singleton $\{0\}$ is added to the support with the sole purpose to have $\mathcal A(0)$ non-empty. Since $\mathcal A(\kappa)$ may be empty for other $\kappa \in \mathcal M (\R^2; \R^2)$, we set
\begin{subequations} \label{Em:def}
\begin{align} \label{Em:dom}
  D(\mathcal E) 
  := \big\{ \kappa \in \mathcal M (\R^2; \R^2) : \mathcal A(\kappa) \neq \emptyset \big\}
\end{align}
as the support of the continuum limit $\mathcal E$, which is then defined as the relaxation of $E$ defined in \eqref{E:def} over $\mathcal A(\kappa)$:
\begin{align} \label{Em:short}
  \mathcal E (\kappa) 
  := \inf_{\bmu \in \mathcal A(\kappa)} E (\bmu). 
\end{align}
\end{subequations}

For our $\Gamma$-convergence result, we use again the modified topology, but this time on the space $\mathcal M (\R^2; \R^2)$:
\begin{equation} \label{cTop:R2}
  \kappa_n \xweakto c \kappa \text{ as } n \to \infty 
  \quad \Longleftrightarrow \quad
  \left\{ \begin{aligned}
        &\kappa_n \weakto \kappa \text{ as $n \to \infty$, and} \\
        &\bigcup_{n = 1}^\infty \supp \kappa_n \: \text{ is bounded.} 
  \end{aligned} \right.
\end{equation}

\begin{cor}[$\Gamma$-convergence] \label{c:t}
Let $S \in \N$ be positive, and let $\xi_1, \dots, \xi_S \in \mathbb S$. Let $E_n$ be as in \eqref{En:mun:intro} with domain $D_\circ (E_n)$. Then, under the same conditions of Theorem \ref{t}, $E_n$ $\Gamma$-converges to $\mathcal E$ with respect to the topology in \eqref{cTop:R2}, i.e.,
\begin{subequations}
\label{cf:Gconv}
\begin{alignat}2
\label{cf:Glimf}
&\forall \, \kappa \in D(E), \ \forall \, \kappa_n \in D_\circ (E_n), \, \kappa_n \xweakto c \kappa : 
& \ \liminf_{n\to\infty}  E_n (\kappa_n) 
&\geq \mathcal E(\kappa), 
\\
&\forall \, \kappa \in D(E), \ \exists \, \kappa_n \in D_\circ (E_n), \, \kappa_n \xweakto c \kappa : 
& \ \limsup_{n\to\infty}  E_n (\kappa_n) 
&\leq \mathcal E(\kappa).
\label{cf:Glimp}
\end{alignat}
\end{subequations} 
\end{cor}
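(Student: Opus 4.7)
My plan is to deduce Corollary \ref{c:t} from Theorem \ref{t} by lifting each admissible $\kappa_n \in D_\circ(E_n)$ to the unique associated $\bmu_n \in D(E_n)$ and projecting via $\kappa = \sum_s \xi_s \mu^s$ in the continuum direction. For the $\Gamma$-liminf \eqref{cf:Glimf}, take $\kappa_n \in D_\circ(E_n)$ with $\kappa_n \xweakto c \kappa$ and assume $\liminf E_n(\kappa_n) < \infty$. Because the positions in $\kappa_n$ are pairwise distinct, reading off the Burgers vector at each position uniquely determines $\bmu_n \in D(E_n)$ with $E_n(\kappa_n) = E_n(\bmu_n)$. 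Each $\mu_n^s$ has mass $n_s/n \leq 1$ and support inside the common bounded set $\bigcup_n \supp \kappa_n$; Prokhorov's theorem therefore extracts a subsequence along which $\mu_n^s \weakto \mu^s$ in $\mathcal M_+(\R^2)$ for every $s$, hence $\bmu_n \xweakto c \bmu$ with $\bmu \in D(E)$ and $\kappa = \sum_s \xi_s \mu^s$. Theorem \ref{t} then yields $\liminf_n E_n(\kappa_n) \geq E(\bmu)$.

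The delicate step is passing from $E(\bmu)$ to $\mathcal E(\kappa)$. If $\supp \bmu \subset \{0\} \cup \supp \kappa$, then $\bmu \in \mathcal A(\kappa)$ and we are done immediately. Otherwise the excess mass of $\bmu$ sits on cancellation sites $y_0 \notin \supp \kappa$ where $\sum_s \xi_s \mu^s(\{y_0\}) = 0$; I plan to construct a competitor $\bmu^\star \in \mathcal A(\kappa)$ by transporting such cancelling mass to the origin and argue $E(\bmu^\star) \leq E(\bmu)$ using the convolution-square representation of $F$ in \eqref{En:def}, together with the continuity of $G$ with respect to narrow convergence of $\mu^s \otimes \mu^t$. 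This comparison is where I expect the main technical work, and I would control the transport via the domination Assumption \ref{a:Vd}\ref{a:Vd:dom} combined with the sub-lattice construction used in the recovery sequence of Theorem \ref{t}. In the case that $\mathcal E(\kappa)$ is interpreted as the relaxation of $E$ over all $\bmu$ compatible with $\kappa$, this step becomes tautological and can be skipped.

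For the $\Gamma$-limsup \eqref{cf:Glimp}, fix $\kappa \in D(\mathcal E)$ and, for each $\varepsilon > 0$, pick $\bmu^\varepsilon \in \mathcal A(\kappa)$ with $E(\bmu^\varepsilon) \leq \mathcal E(\kappa) + \varepsilon$. The recovery sequence $\bmu_n^\varepsilon \xweakto c \bmu^\varepsilon$ supplied by Theorem \ref{t} is built by placing the particles of different species on disjoint sub-lattices (cf.\ \eqref{for:pf:limp:Enpm:1} in the proof of Theorem \ref{t}), so all positions are pairwise distinct. Consequently the projected measure $\kappa_n^\varepsilon := \sum_s \xi_s (\mu_n^\varepsilon)^s$ lies in $D_\circ(E_n)$, narrow convergence of each species together with the uniform support bound yields $\kappa_n^\varepsilon \xweakto c \kappa$, and $E_n(\kappa_n^\varepsilon) = E_n(\bmu_n^\varepsilon)$ gives $\limsup_n E_n(\kappa_n^\varepsilon) \leq E(\bmu^\varepsilon) \leq \mathcal E(\kappa) + \varepsilon$. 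A standard diagonal extraction along $\varepsilon \to 0$ produces a single sequence $\kappa_n \in D_\circ(E_n)$ with $\kappa_n \xweakto c \kappa$ and $\limsup_n E_n(\kappa_n) \leq \mathcal E(\kappa)$, completing the proof.
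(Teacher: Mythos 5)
Your overall strategy---lift $\kappa_n$ to $\bmu_n$ via the distinct positions, extract a Prokhorov subsequence, invoke Theorem \ref{t}, and diagonalise for the limsup---is precisely the paper's, and your limsup half reproduces the paper's argument step by step.

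The issue you flag in the liminf is a genuine one, and notably the paper's own proof does not engage with it: once the subsequential limit $\bmu$ is obtained, the paper simply writes $E(\bmu) \geq \mathcal E(\kappa)$, which tacitly presumes $\bmu \in \mathcal A(\kappa)$. As you observe, $\bmu$ may place mass on cancellation sets outside $\{0\} \cup \supp \kappa$, in which case $\bmu \notin \mathcal A(\kappa)$ and the inequality is not a consequence of $\mathcal E$ being an infimum over $\mathcal A(\kappa)$. What is really needed is a comparison lemma: for every $\bmu \in D(E)$ with $\sum_{s} \xi_s \mu^s = \kappa$ there exists $\bmu' \in \mathcal A(\kappa)$ with $E(\bmu') \leq E(\bmu)$. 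Your proposed repair is not convincing as stated---Assumption \ref{a:Vd}\ref{a:Vd:dom} only controls $V_\delta$ near the origin and says nothing about comparing two continuum configurations, and the sub-lattice construction is a discretisation device, not a comparison device. In the motivating application the lemma is automatic, since for edge dislocations $V^{st}(x) = V(x;\xi_s,\xi_t)$ is bilinear in $(\xi_s,\xi_t)$ and the $W_k^s$, $\Vreg^{st}$ inherit that (bi)linearity through the linear system \eqref{Kphi:eqn}, so $E(\bmu)$ depends on $\bmu$ only through $\kappa$; but in the general framework of Assumption \ref{a:V} this is not built into the structure. Note also that dropping the support constraint from $\mathcal A(\kappa)$ (your ``tautological'' escape hatch) does not make the corollary free of charge: that same constraint is exactly what gives the uniform bound on $\supp \bmu_n^{\e_n}$ in the diagonal step of the limsup, via $\supp \bmu^{\e} \subset \{0\} \cup \supp \kappa$ together with the fixed distance bound from the discretisation in Theorem \ref{t}. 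Removing it trades the liminf difficulty for a limsup one. To close the proof in full generality you must either establish the comparison lemma above or restrict to potentials with the bilinear structure; your write-up should make this explicit rather than leave it as ``the main technical work''.
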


\begin{proof}
\emph{The liminf inequality \eqref{cf:Glimf}}. 
Given any $\kappa_n \xweakto c \kappa$, let $\bmu_n$ be defined by \eqref{kappan}. Since $\kappa_n$ is tight, for each $s$ the sequence $\mu_{n}^s$ is tight too, and thus $\mu_{n}^s$ converges along a subsequence (not relabelled) to some $\mu^s$ with $\mu^s (\R^2) = \lim_{n \to \infty} n_s/n$. Hence, the obtained measure $\bmu$ satisfies $\bmu \in D(E)$, and thus we find that $\kappa = \sum_{s=1}^S \xi_s \mu^s \in D(\mathcal E)$. We conclude from Theorem \ref{t} that
\begin{equation*}
  \liminf_{n\to\infty}  E_n (\kappa_n)
  \geq E (\bmu)
  \geq \mathcal E (\kappa).
\end{equation*}
\smallskip

\emph{The limsup inequality \eqref{cf:Glimp}}. 
Given $\kappa$, let $(\bmu^\varepsilon)_{\varepsilon > 0} \subset D(E)$ be a minimising sequence for the minimisation problem in \eqref{Em:short}, i.e., $E(\bmu^\varepsilon) \to \mathcal E(\kappa)$ as $\varepsilon \to 0$. Then, for any $\varepsilon > 0$, Theorem \ref{t} provides a sequence $\bmu_n^\varepsilon \xweakto c \bmu^\varepsilon$ as $n \to \infty$ (and thus also $\kappa_n^\varepsilon \xweakto c \kappa$ as $n \to \infty$) for which $\limsup_{n\to\infty}  E_n (\kappa_n^\varepsilon) \leq E(\bmu^\varepsilon)$. 
From a diagonal argument we obtain a sequence $\e_n \to 0$ as $n \to \infty$ such that $\kappa_n := \kappa_n^{\e_n} \weakto \kappa$ as $n \to \infty$ and $\limsup_{n \to \infty}  E_n (\kappa_n) \leq \mathcal E(\kappa)$. 

It is left to show that $\kappa_n \in D_\circ(E_n)$ and $\kappa_n \xweakto c \kappa$ as $n \to \infty$. From the construction of $\bmu_n^{\e_n}$ in the proof of Theorem \ref{t} in terms of the particle positions $x_i^s$ it follows that $\kappa_n \in D_\circ(E_n)$ and that there exists a constant $C > 0$ independent of $n$ such that
\begin{equation*} 
  \dist (x_i^s, \supp \bmu^{\e_n}) \leq C.
\end{equation*}
Then, since $\supp \bmu^{\e_n} \subset \supp \{0\} \cup \kappa$ for all $n$, we have that $\supp \bmu_n^{\e_n}$ is bounded uniformly in $n$, and thus $\kappa_n \xweakto c \kappa$ as $n \to \infty$.
\end{proof}

\section{Riesz potentials}
\label{s:Riesz}

In this section we show how Theorem \ref{t} can be applied to potentials $V^{st}$ whose singularity is described by a Riesz potential. The Riesz potentials are defined for some parameter $0 < a < 2$  by
\begin{equation*} 
  \cV_a(x) := |x|^{-a}
  \quad \text{for all } x \in \R^2 \setminus \{0\}.
\end{equation*}
To focus the attention to $\cV_a$ and not to $V^{st}$ constructed thereof, we consider the easiest non-trivial case given by $2$-species (i.e., $S = 2$) for which $V^{st} := (-1)^{s+t} \cV_a$. This case corresponds to positively and negatively charged particles with a different interaction potential than the Coulomb potential (the Coulomb potential is given by $V(x) = -\log|x|$; we treat this case briefly in Section \ref{s:Riesz:log}). We also provide two examples for regularised potentials $V_\delta^{st}$ in Sections \ref{s:Riesz:LB} and \ref{s:Riesz:convo}.

\subsection{The Riesz potentials fit to Assumption \ref{a:V}}
\label{s:Riesz:aV}

Let $0 < a < 2$. We show that $V^{st} = (-1)^{s+t} \cV_a$ satisfies Assumption \ref{a:V} with $\mathsf K = 1$. With this aim, we recall that 
\begin{equation} \label{cVa:hat}
  \widehat{\cV_a} (\omega)
  = C_a |\omega|^{-(2-a)}
  = C_a |\omega|^{-1 + \tfrac{a}2} |\omega|^{-1 + \tfrac{a}2}
  = C_a' \mathcal F \left( \cV_{1 + \tfrac a2} * \cV_{1 + \tfrac a2} \right) (\omega)
\end{equation}
in the sense of tempered distributions, where $C_a, C_a' > 0$ are explicit constants. Inspired by this observation, we set
\begin{equation*}
  W^{s} := (-1)^s \cV_{b} \psi,
  \qquad \Vreg^{st} := V^{st} - C_a' \overline W^{s} * W^{t}.
\end{equation*}
where $b := 1 + \frac a2$ and $\psi(x) := 1 \wedge (2 - |x|) \vee 0$ is a continuous cut-off function. Since the dependence of the potential on $s$ and $t$ is solely through the multiplication by $-1$, we often set
\begin{align*}
  W &:= W^2 = -W^{1} \geq 0, \\
  V &:= (-1)^{s+t} V^{st} = \cV_a \geq 0, \\
  \Vreg &:= (-1)^{s+t} \Vreg^{st} = C_a' \big( \cV_b * \cV_b - (\cV_{b} \psi) * (\cV_{b} \psi) \big) \geq 0.
\end{align*}

It is easy to see that most of the conditions in Assumption \ref{a:V} on the potentials $\Vreg^{st}$ and $W^s$ are satisfied. The only condition which may not be obvious is whether $\Vreg$ is continuous at the origin. To prove this, we take $x \neq 0$, and write
\begin{equation*}
  \frac1{C_a'} \Vreg (x)
  = \int_{\R^2} |x-y|^{-b} |y|^{-b} \big[ 1 - \psi(x-y) \psi (y) \big] \, dy.
\end{equation*} 
We observe from $2b = 2 + a$ that the right-hand side is finite for $x = 0$. Setting $\Vreg (0)/C_a'$ as this value, we obtain from the constant parts of $\psi$ that
\begin{multline} \label{Va:Vreg:ct}
  \frac{\Vreg (x) - \Vreg (0)}{C_a'} 
  = \int_{A_1(x)} |y|^{-b} \Big( |x-y|^{-b} \big[ 1 - \psi(x-y) \psi (y) \big] - |y|^{-b} \big[ 1 - \psi (y)^2 \big] \Big) \, dy \\
    + \int_{A_2(x)} |y|^{-b} \Big( |x-y|^{-b} - |y|^{-b} \Big) \, dy,
\end{multline}
where the domain
\begin{equation*}
  A_1(x) := \big( B(0,2) \cup B(x,2) \big) \setminus \big( B(0,1) \cap B(x,1) \big)
\end{equation*}
is bounded in size uniformly in $x$, and 
\begin{equation*}
  A_2(x) := \R^2 \setminus \big( B(0,2) \cup B(x,2) \big).
\end{equation*}
It is then easy to verify that both integrals in \eqref{Va:Vreg:ct} tend to $0$ as $x \to 0$. We conclude that $\Vreg$ is continuous at $0$.

\subsection{Example 1 for $V_\delta$: approximation from below}
\label{s:Riesz:LB}

Here we provide our first example for a regularisation $V_\delta$ of $V$ which satisfies Assumption \ref{a:Vd}. We set $\Vreg^\delta := \Vreg$ and take $W_\delta$ as a continuous approximation of $W$ from below. More specifically, we take $W_\delta$ continuous, radially symmetric and decreasing in the radial direction such that $0 \leq W_\delta (x) \uparrow W(x)$ for all $x \neq 0$. Two examples of such $W_\delta$ are
\begin{equation*}
  W_\delta(x) = (|x| + \delta)^{-b} \psi
  \quad \text{and} \quad
  W_\delta(x) = \left\{ \begin{array}{l}
    W(x) \quad \text{if } |x| > \delta \\
    + \text{ affine extension in radial direction}
  \end{array} \right.
\end{equation*} 
It is readily verified that $V_{\delta} = \Vreg^\delta + \overline W_\delta * W_\delta$ satisfies Assumption \ref{a:Vd} (since $0 \leq V_\delta (x) \leq V(x)$, the dominator $U$ in Assumption \ref{a:Vd}\ref{a:Vd:dom} can simply be taken as $V$). 

\subsection{Example 2 for $V_\delta$: mollifying $V$}
\label{s:Riesz:convo}

Here we provide our second example for $V_\delta$, which is constructed by mollifying $V$. With this aim, let $\varphi \in C (\R^2)$ be radially symmetric with compact support, $\int_{\R^2} \varphi = 1$ and
\begin{equation*}
  \Phi := \overline \varphi * \varphi \geq 0.
\end{equation*} 
For $\delta>0$ we define $\varphi_\delta(x) := \varphi (x / \delta)/\delta^2$ and $\Phi_\delta(x) := \Phi (x / \delta)/\delta^2$, and note that $\Phi_\delta = \overline \varphi_\delta * \varphi_\delta$. Finally, we set
\begin{equation*}
  W_\delta := \varphi_\delta * W, 
  \quad
  \Vreg^\delta := \Phi_\delta * \Vreg
  \quad \text{and} \quad
  V_\delta
  := \Vreg^\delta + \overline W_\delta * W_\delta.
\end{equation*}

Assumption \ref{a:Vd}\ref{a:Vd:W-form} is satisfied by construction. By observing that
\begin{equation} \label{Vphiphi}
  V_\delta
  = \Phi_\delta * \Vreg
    + (\overline \varphi_\delta * \overline W)
    * (\varphi_\delta * W) \\
  = (\overline \varphi_\delta * \varphi_\delta) * \big( \Vreg + \overline W * W \big)
  = \Phi_\delta * V,
\end{equation}
it is clear that Assumptions \ref{a:Vd}\ref{a:Vd:reg:even} and \ref{a:Vd}\ref{a:Vd:uf:conv} are satisfied too. To show Assumption \ref{a:Vd}\ref{a:Vd:dom}, we construct a dominator $U$. First, we take any $x \in \R^2$ with $|x| = 1$, and note by the radial symmetry of $V_\delta$ that the function
\begin{equation*}
  \gamma : (0, \infty) \to \R,
  \qquad \gamma(\delta) := V_{\delta}(x)
\end{equation*}
is independent of the choice of $x$. Moreover, $\gamma$ is continuous on $(0,\infty)$ with $\gamma \to 1$ as $\delta \to 0$ and $\gamma \to 0$ as $\delta \to \infty$. Thus,
\begin{equation*}
  M := \sup_{0 < \delta < \infty} V_{\delta}(x) \geq 1
\end{equation*}
is finite.

To extend to $\alpha x \in B(0,1)$ for any $0 < \alpha < 1$, we use \eqref{Vphiphi} to compute
\begin{multline*}
  V_{\delta} (\alpha x)
  = \int_{\R^2} |\alpha x - y|^{-a} \Phi_\delta (y) \, dy 
  = \int_{\R^2} |\alpha x - \alpha z|^{-a} \Phi_\delta (\alpha z) \, \alpha^2 dz \\
  = |\alpha|^{-a} \int_{\R^2} |x - z|^{-a} \Phi_{\delta / \alpha} (z) \, dz 
  = \cV(\alpha x) V_{\delta / \alpha} (x).
\end{multline*}
Hence, for any $x \in B(0,1) \setminus \{0\}$, it follows from the argument above that
\begin{equation*}
  \sup_{0 < \delta < \infty} V_{\delta} (x)
  = \cV_a(x) \sup_{0 < \delta < \infty} V_{ \delta/|x| } \Big( \frac x{|x|} \Big)
  = \cV_a(x) \sup_{0 < \varepsilon < \infty} V_{\varepsilon} \Big( \frac x{|x|} \Big)
  = M \cV_a(x) =: U(x).
\end{equation*}

\subsection{The logarithmic potential}
\label{s:Riesz:log}

For $V(x) = -\log|x|$, the energy $E_n$ models both a collection of screw dislocations as well as a collection of vortices, and is therefore of independent interest. Since the interaction potential for edge dislocations in \eqref{V:bibj} is a more complicated potential of logarithmic type, we refer to Section \ref{s:V:edge} for fitting $V$ to Assumption \ref{a:V} and for constructing regularisations $V_\delta$ of it which satisfy Assumption \ref{a:Vd}. In particular, for the decomposition of $V$ in \eqref{V:W-form}, the analysis in Section \ref{s:V:edge:aV} can be somewhat simplified by using the structure on screw dislocations obtained in \cite{BlassMorandotti17}.

\section{The case of edge dislocations} 
\label{s:V:edge}

In this section we show that Theorem \ref{t} applies to the model case of edge dislocations. Similar to Section \ref{s:Riesz}, we first show in Section \ref{s:V:edge:aV} that $V(\cdot \, ;b_i, b_j)$ as defined in \eqref{V:bibj} satisfies Assumption \ref{a:V}. Then, in Sections \ref{s:V:edge:perf} and \ref{s:V:edge:convo} we give two examples of regularisations $V_\delta$ of $V$ which satisfy Assumption \ref{a:Vd}.

\subsection{The interaction potential between edge dislocations}
\label{s:V:edge:aV}

For the analysis in this section, it is more convenient to express $V(\cdot \, ;b_i, b_j)$ in terms of the angles $\phi_i$ and $\phi_j$ which the respective Burgers vectors $b_i$ and $b_j$ make with the horizontal axis, i.e., $\cos \phi_i = b_i \cdot e_1$ and $\cos \phi_j = b_j\cdot e_1$. Then, we rewrite \eqref{V:bibj} as
\begin{equation} \label{V:phipsi}
    V(x; \phi_i,\phi_j)  
    = - \cos (\phi_i - \phi_j) \log | x | + \cos^2 \Big( \arccos \Big[ \frac x{|x|} \cdot e_1 \Big] - \frac{\phi_i + \phi_j}2 \Big).
\end{equation}

To find a decomposition of the form \eqref{V:W-form}, we use the interpretation of $V$ as a renormalisation of the elastic energy in the surrounding elastic medium. This derivation is done in \cite{CermelliLeoni06}. Here, we briefly recall the setting and several results from \cite{CermelliLeoni06}. Let the elastic domain be given by a simply connected, smooth and {bounded} domain $\Omega \subset \R^2$. The interaction potential between two dislocations at $x, y \in \Omega$ with Burgers vectors given in terms of the angles $\phi$ and $\psi$ is given by
\begin{equation} \label{VOmega}
V_\Omega (x,y; \phi, \psi) 
= \int_{\Omega} \C{} K^\phi (z - x) : K^\psi (z - y) \, dz.
\end{equation}
In \eqref{VOmega}, $\mathbb{C}$ is the tensor of isotropic linearised elasticity defined, for $F\in \R^{2\times 2}$, as
$$
  \mathbb{C}F := \lambda (\tr F) \Id + 2 \mu \sym F. 
$$ 
The product
$$
  A:B = \sum_{i,j=1}^2 A_{ij} B_{ij} = \tr (A^T B) \quad \text{for } A, B\in \R^{2\times 2}
$$
denotes the Frobenius inner product between matrices. Finally, the function \\
$K^\phi \in L_{\text{loc}}^1 (\R^2; \R^{2 \times 2})$ is a solution of 
\begin{equation} \label{Kphi:eqn}
\begin{cases}
  \Curl K^\phi = b_\phi \delta_0 
  & \text{in } \R^2,
  \\
  \Div \C{} K^\phi = 0
  & \text{in } \R^2,
\end{cases}
\end{equation}
in distributional sense, where $b_\phi$ is the Burgers vector related to $\phi$. In other words, the function $K^\phi$ describes the strain in $\R^2$ induced by a dislocation at $0$ with Burgers vector $b_\phi$. The explicit expression for $K^\phi$ is given in \eqref{for:Kphi:explicit}. Here, it is enough to know that
\begin{equation} \label{Kphi:props:basic}
  K^\phi (x) = \frac1{|x|} \tilde K^\phi\Big( \frac x{|x|} \Big),
  \qquad K^\phi (-x) = K^\phi (x)
\end{equation}
for some regular, $\R^{2 \times 2}$-valued function $\tilde K^\phi$.

We note that the expression of $V_\Omega$ in \eqref{VOmega} resembles the convolution terms in \eqref{V:W-form}. Indeed, for a convenient choice of $\Omega$, we show in Remark \ref{remark:Wk} that we can rewrite it as a sum of convolutions of some functions $W_k^\phi$. Moreover, In \cite[Prop.~5.2]{CermelliLeoni06} it is already shown that
\begin{equation}\label{CKK:O1}
  V_\Omega (x,y; \phi, \psi) 
  = -\frac{ \mu ( \lambda + \mu ) }{ \pi ( \lambda + 2 \mu ) } \cos(\phi - \psi) \log |x - y| + \mathcal O (1)
\end{equation}
as $|x - y| \to 0$, as long as $x, y \in \Omega$ stay away from $\partial \Omega$. Note that this expression is consistent, up to the multiplicative constant $\frac{ \mu ( \lambda + \mu ) }{ \pi ( \lambda + 2 \mu ) }$, with that of $V$ in \eqref{V:phipsi}.

Next we build further on these observations to find a decomposition of $V$ as in \eqref{V:W-form}. In particular, in Proposition \ref{p:V:CKK} we update the expansion in \eqref{CKK:O1} up to $o(1)$ and show that $V_\Omega$ captures the singularity of $V$ at $0$. This is non-trivial, because the second term in \eqref{V:phipsi} is discontinuous at $0$. 

\begin{prop}[Decomposition of $V$] \label{p:V:CKK}
Let $V$ be as in \eqref{V:phipsi}. Then, for all $\phi, \psi \in [0, 2\pi)$ there exists $\Vreg ( \, \cdot \, ; \phi, \psi ) \in C (\R^2)$ such that for all $x,y \in \R^2$ with $x \neq y$,
\begin{equation} \label{V:CKK} 
V(x - y; \phi, \psi) =  \frac{ \pi ( \lambda + 2 \mu ) }{ \mu ( \lambda + \mu ) } \int_{B(x,1) \cap B(y,1)} \C K^\phi (z - x) : K^\psi (z - y) \, dz 
+ \Vreg (x - y; \phi, \psi),
\end{equation}
where $\C$ and $K^\phi$ are as in \eqref{VOmega}, and $B(x,1)$ and $B(y,1)$ are the balls of radius $1$ centred at $x$ and $y$ respectively. Moreover, for a  bounded Lipschitz domain $\Omega\subset \R^2$, it holds that
\begin{equation} \label{V:VOm:gOm}
 V_\Omega (x,y; \phi, \psi) 
 = \frac{ \mu ( \lambda + \mu ) }{ \pi ( \lambda + 2 \mu ) } V(x - y; \phi, \psi) + g_\Omega (x, y; \phi, \psi),
\end{equation}
for some $g_\Omega ( \, \cdot \, , \cdot \, ; \phi, \psi ) \in C (\Omega^2)$.
\end{prop}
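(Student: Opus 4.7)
I would establish \eqref{V:CKK} first and then derive \eqref{V:VOm:gOm} by a splitting-and-continuity argument. By translation invariance of $K^\phi$, the integral on the right-hand side of \eqref{V:CKK} depends on $(x,y)$ only through $w := x-y$. Setting $c := \mu(\lambda+\mu)/[\pi(\lambda+2\mu)]$ and
\[
I(w;\phi,\psi) := \int_{B(0,1)\cap B(w,1)} \C K^\phi(z) : K^\psi(z-w)\,dz \qquad (|w| < 2),
\]
with $I \equiv 0$ for $|w| \geq 2$, the aim is to show that $\Vreg(w;\phi,\psi) := V(w;\phi,\psi) - c^{-1} I(w;\phi,\psi)$ extends continuously to $\R^2$. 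On $\R^2 \setminus \{0\}$ both $V$ and $I$ are continuous (for $I$, by dominated convergence, since the domain $B(0,1)\cap B(w,1)$ depends continuously on $w$ and the integrand admits locally integrable majorants), so the core difficulty is at $w = 0$.

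\textbf{Asymptotics of $I$ at $w = 0$.} Using the homogeneity $K^\phi(z) = |z|^{-1}\tilde K^\phi(z/|z|)$ from \eqref{Kphi:props:basic}, I would rescale $z = |w|\zeta$; the Jacobian $|w|^2$ cancels $|z|^{-1}|z-w|^{-1}$, yielding
\[
I(w;\phi,\psi) = \int_{B(0,1/|w|)\cap B(\hat w, 1/|w|)} \frac{\C \tilde K^\phi(\zeta/|\zeta|) : \tilde K^\psi((\zeta - \hat w)/|\zeta - \hat w|)}{|\zeta|\,|\zeta - \hat w|}\,d\zeta,
\]
where $\hat w := w/|w|$. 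I split this into an inner part, supported in a fixed neighbourhood of $\{0,\hat w\}$ whose limit as $|w|\to 0$ is continuous in $\hat w$, and an outer part where the integrand is asymptotic to $|\zeta|^{-2}\,\C \tilde K^\phi(\hat\zeta):\tilde K^\psi(\hat\zeta)$. Integrating the outer part in polar coordinates produces $\alpha(\phi,\psi)\log(1/|w|) + \beta(\hat w;\phi,\psi)$, where $\alpha(\phi,\psi) := \int_0^{2\pi} \C \tilde K^\phi(e_\theta):\tilde K^\psi(e_\theta)\,d\theta$ and $\beta$ is continuous in $\hat w$. The explicit form of $K^\phi$ in \eqref{for:Kphi:explicit} combined with the isotropy of $\C$ makes it a direct (if lengthy) computation that $\alpha(\phi,\psi) = c\cos(\phi-\psi)$, so that $\alpha\log(1/|w|)$ exactly matches the logarithmic part of $cV(w;\phi,\psi)$. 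A further angular evaluation shows that the aggregate bounded contribution (the limit of the inner part plus $\beta$) reproduces the discontinuous term $\cos^2[\arccos(\hat w\cdot e_1) - (\phi+\psi)/2]$ of $V$ modulo a remainder continuous in $w$, proving \eqref{V:CKK}.

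\textbf{Decomposition \eqref{V:VOm:gOm}.} Write $V_\Omega(x,y;\phi,\psi) = I(x-y;\phi,\psi) + J_\Omega(x,y;\phi,\psi)$ with
\[
J_\Omega(x,y;\phi,\psi) := \int_{\Omega \setminus [B(x,1)\cap B(y,1)]} \C K^\phi(z-x):K^\psi(z-y)\,dz.
\]
I claim $J_\Omega \in C(\Omega^2)$. On a neighbourhood of the diagonal in $\Omega^2$, the excluded region contains both singularities $z=x,y$ and the integrand is uniformly bounded on the integration domain; away from the diagonal the singularities lie in the integration domain but are of integrable order $|z-x|^{-1}, |z-y|^{-1}$ in two dimensions, and the triangle inequality provides an $L^1$ dominant uniform on compacta. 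Dominated convergence then yields continuity of $J_\Omega$ throughout $\Omega^2$. Combined with the identity $I = cV - c\Vreg$ from \eqref{V:CKK}, this gives $V_\Omega = cV + g_\Omega$ with $g_\Omega := J_\Omega - c\Vreg \in C(\Omega^2)$.

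\textbf{Main obstacle.} The analytically non-trivial step is the $o(1)$ expansion of $I(w)$ at $w = 0$. The leading log behaviour is essentially contained in \eqref{CKK:O1} from \cite{CermelliLeoni06}, but matching the full angular structure of $V$—in particular its discontinuous $\cos^2$-term—modulo a continuous remainder requires explicit evaluation of angular integrals against the rational function $\tilde K^\phi$. The specific choice of the symmetric domain $B(0,1)\cap B(w,1)$ is made precisely to keep these integrals tractable and to avoid spurious boundary contributions that would otherwise obstruct the continuous extension.
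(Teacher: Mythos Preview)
Your strategy is sound and genuinely different from the paper's. You attack the asymptotics of $I(w)$ head-on via the scaling $z = |w|\zeta$ and an inner/outer splitting; the paper instead exploits the PDE structure behind $K^\phi$. Concretely, the paper first works on the \emph{asymmetric} domain $B(0,1)$ (not $B(0,1)\cap B(w,1)$), uses rotation invariance to reduce to $\int_{B(0,1)} \C K : K_x^{\tilde\phi}$ with one Burgers vector horizontal, and then integrates by parts using $K_x^{\tilde\phi} = \nabla w_x^{\tilde\phi}$ off a branch cut together with $\Div \C K = 0$. This collapses the volume integral into a line integral over $\partial B(0,1)$ plus a line integral over the branch cut; both are evaluated exactly from the explicit formula \eqref{for:Kphi:explicit}, yielding $c[-\cos\tilde\phi\,\log r + \tfrac12\cos(2\theta-\tilde\phi)] + \text{const} + o(1)$, i.e.\ precisely $cV$ up to a continuous term. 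The passage to the symmetric domain $B(0,1)\cap B(w,1)$ and to general $\Omega$ is then a short continuity argument, much as in your second part.

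The practical difference is that the integration-by-parts route delivers the discontinuous $\cos^2$ term \emph{for free} as the value of an explicit boundary integral, whereas in your approach this is exactly the ``further angular evaluation'' you flag as the main obstacle and leave unexecuted. Your outline correctly isolates $\alpha(\phi,\psi)\log(1/|w|)$ and a bounded remainder $f(\hat w;\phi,\psi)$, but the proposition's whole content is that $f(\hat w) - c\cos^2[\arccos(\hat w\cdot e_1)-(\phi+\psi)/2]$ is \emph{constant in $\hat w$}; merely asserting this is not a proof. If you want to push your route through, one option is to mimic the paper's trick at the rescaled level (integrate by parts on $\R^2\setminus\Gamma$ for the inner part), which would let you avoid brute-force angular integrals. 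Your derivation of \eqref{V:VOm:gOm} from \eqref{V:CKK} via $J_\Omega$ is correct and essentially the same as the paper's.
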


The decomposition in \eqref{V:VOm:gOm} is essentially the splitting of the Green's function relative to $\Omega$, in the linearised elasticity context, into its `fundamental solution' part, and a smooth term carrying the information on the boundary contribution. The proof of Proposition \ref{p:V:CKK} is left to Appendix \ref{a:Prop:pf}. 

\begin{rem}[Higher regularity for $\Vreg$] 
The choice of the integration domain in \eqref{V:CKK} is equivalent to multiplying $K^\phi$ and $K^\psi$ with the cut-off function $\indicatornoacc{ B(0,1) }$. Alternatively, one could use more regular cut-off functions, which would result in a higher regularity for $\Vreg$ outside the origin.
\end{rem}

\begin{rem}[Construction of $W_k^\phi$] \label{remark:Wk}
By Proposition \ref{p:V:CKK} , since $\mathbb C$ is symmetric and positive definite, there exists a symmetric linear operator $\mathbb D$ such that $\mathbb D^2 = \mathbb C$. We use this to rewrite the integral in \eqref{V:CKK} as
\begin{align*} 
&\int_{B (x, 1) \cap B (y, 1)} \C{} K^\phi (z - x) : K^\psi (z - y) \, dz \\ 
&= \int_{\R^2} \lrhaa{ \mathbb D K^\phi \indicatornoacc{ B (0, 1) } } (z-x) : \lrhaa{ \mathbb D K^\psi \indicatornoacc{ B (0, 1) } } (z-y) \, dz \\ 
&= \sum_{i,j=1}^2 \Bighaa{ \bighaa{ (\mathbb D \overline K^\phi)_{ij} \indicatornoacc{ B (0, 1) } } * \bighaa{ (\mathbb D K^\psi)_{ij} \indicatornoacc{ B (0, 1) } } } (x-y) \\ 
&=: \frac{ \mu ( \lambda + \mu ) }{ \pi ( \lambda + 2 \mu ) } \sum_{k=1}^4 \bighaa{ \overline W_k^\phi * W_k^\psi } (x-y).
\end{align*}
\end{rem}

In conclusion, Proposition \ref{p:V:CKK} and Remark \ref{remark:Wk} provide a decomposition of $V$ of the type \eqref{V:phipsi}. It is readily checked that this decomposition satisfies Assumption \ref{a:V}. 

\subsection{Example 1 for $V_\delta$: perforating the domain}
\label{s:V:edge:perf}

Given the decomposition of $V$ as in Section \ref{s:V:edge:aV}, we construct a regularisation $V_\delta$ of it which satisfies Assumption \ref{a:Vd}. Inspired by the idea to perforate the domain as in \eqref{Ed:GLP10}, we set 
\begin{equation*}
  W_{\delta, k}^\phi := W_k^\phi \indicatornoacc{B(0,\delta)^c} \in L^\infty (\R^2),
  \qquad \Vreg^{\delta, st} := \Vreg^{st}.
\end{equation*}  
We recall from \eqref{Kphi:props:basic} that $W_k^\phi \in L^1 (\R^2)$, that $\supp W_k^\phi = \overline{ B(0,1) }$, that $W_k^\phi$ is odd and that $|W_k^\phi(x)| \leq C/|x|$. 
  
Next we show that $V_\delta(x; \phi, \psi)$ as constructed from $W_{\delta, k}^\phi$ and $\Vreg^{\delta}$ through \eqref{V:W-form-d} satisfies Assumption \ref{a:Vd}. Conditions \ref{a:Vd:uf:conv} and \ref{a:Vd:W-form} are readily verified.
Condition \ref{a:Vd:reg:even} follows from the oddness of $W_{\delta, k}^\phi$ and from the observation that $W_{\delta,k}^\phi \in L^1 (\R^2) \cap L^\infty (\R^2)$. 
To show Condition \ref{a:Vd:dom}, we fix some $0 < \e < 1$, and use \eqref{Kphi:props:basic} and \eqref{cVa:hat} to estimate for all $x \in \R^2 \setminus \{0\}$
\begin{multline*}
  \big| W_{\delta,k}^\phi * W_{\delta,k}^\psi \big|(x)
  \leq \big( | W_{k}^\phi | * | W_{k}^\psi | \big)(x)
  \leq C^2 \bigg( \frac{ \indicatornoacc{B(0,1)} }{|\cdot|} * \frac{ \indicatornoacc{B(0,1)} }{|\cdot|} \bigg)(x) \\
  \leq C^2 \bigg( \frac{ \indicatornoacc{B(0,1)} }{|\cdot|^{1+\e}} * \frac{ \indicatornoacc{B(0,1)} }{|\cdot|^{1+\e}} \bigg)(x)
  \leq \tilde C |x|^{-2\e}.
\end{multline*}
\medskip

While the regularisation above fits, the equivalent of $E_\delta$ in \eqref{Ed:GLP10} would not fit. The reason is that in $E_\delta$, the medium is perforated around each dislocation. Then, the interaction potential between two dislocations also depends on the position of all others, and is therefore no more a pairwise interaction potential. Under a further restriction in which the dislocations are well separated, this perturbation to the interaction potential is expected to be negligible.

\subsection{Example 2 for $V_\delta$: mollifying $V$}
\label{s:V:edge:convo}

As in Section \ref{s:Riesz:convo}, we set
\begin{equation*}
  W_{\delta,k}^\phi = \varphi_\delta * W_{k}^\phi 
  \quad \text{and} \quad
  \Vreg^\delta(\cdot \, ; \phi, \psi) = \Phi_\delta * \Vreg (\cdot \, ; \phi, \psi),
\end{equation*}
where we assume that the mollifiers $\varphi_\delta$ and $\Phi_\delta$ satisfy the same conditions as in Section \ref{s:Riesz:convo}. Again, we set $V_\delta(x; \phi, \psi)$ as in \eqref{V:W-form-d}, observe as in Section \ref{s:Riesz:convo} that 
\begin{equation*}
  V_\delta = \Phi_\delta * V,
\end{equation*}
and notice that Assumptions \ref{a:Vd}\ref{a:Vd:reg:even}--\ref{a:Vd:W-form} are obviously satisfied.

Assumption \ref{a:Vd}\ref{a:Vd:dom} is less trivial to check. We prove that it is satisfied by constructing a dominator $U$. By the uniform convergence in Assumption \ref{a:Vd}\ref{a:Vd:uf:conv} it is sufficient to construct $U (x)$ for any $|x|$ and $\delta$ small enough. With this aim, let $x \in B(0,\frac12)$ and let $\delta > 0$ be such that $\supp \Phi_\delta \subset B(0, \tfrac12)$. Observing from \eqref{V:phipsi} that $|V(x; \phi, \psi)| \leq -\log | x | + 1$, we obtain from $\Phi_\delta \geq 0$ that
\begin{equation*}
   \big| V_\delta \big| 
   \leq |V| * \Phi_\delta 
   \leq - \log |\cdot| * \Phi_\delta + \int_{\R^2} \Phi_\delta
   = - \log |\cdot| * \Phi_\delta + 1
   \quad \text{on } B \Big( 0, \frac12 \Big).
\end{equation*}
To bound the convolution term, we first recall that the mean-value property of the subharmonic function $\log |\cdot|$ implies 
$$
  \log|x| \leq \frac1{2\pi r} \int_{\partial B(x,r)} \log|y| \, dy
  \quad \text{for all } r > 0.
$$
Then, since $\Phi_\delta$ is radially symmetry with support in $B(0,\frac12)$, 
\begin{align*}
  \big( \log | \cdot | * \Phi_\delta \big) (x) 
  &= \int_0^{\tfrac12} \bigg( \int_0^{2 \pi} \log \bigg| x - r \begin{bmatrix}
  \cos \theta \\ \sin \theta
  \end{bmatrix} \bigg| \, d \theta \bigg) \Phi_\delta (r) r \, dr \\
  &\geq 2 \pi \log | x | \int_0^{\tfrac12} \Phi_\delta (r) \, dr
  = \log | x |.
\end{align*}
Hence, $U(x) := -\log | x | + 1$ is a dominator for $V_\delta(\cdot \, ; \phi, \psi)$ for any $\phi, \psi \in [0, 2\pi)$.
\medskip

\begin{rem}[Mollifying the dislocation core]
Note that the choice of regularisation of $V$ by mollification is equivalent, at the level of the energy, to mollifying the dislocation cores by $\varphi_{\delta}$, and plugging the resulting density $\varphi_{\delta_n} * \bmu_n$ into the energy $E$ obtained as the $\Gamma$-limit. Indeed, from \eqref{En:mun:intro} and \eqref{E:intro} we find that
\begin{align*}
  E_n(\bmu_n)&= \sum_{s,t=1}^S \int_{\R^2} \big( V_{\delta_n}( \cdot \, ; \xi_s, \xi_t) * \mu_n^t \big) \, d \mu_n^s \\
  &= \sum_{s,t=1}^S \int_{\R^2} \big( V ( \cdot \, ; \xi_s, \xi_t) * (\varphi_{\delta_n} * \mu_n^t ) \big) (x) \, (\varphi_{\delta_n} * \mu_n^s ) (x) \, dx\\
  &= E ( \varphi_{\delta_n} * \bmu_n ).
\end{align*}

\end{rem}


\appendix

\section{Proof of Proposition \ref{p:V:CKK}}
\label{a:Prop:pf}

The proof of Proposition \ref{p:V:CKK} is a quantitative version of the proof of \cite[Prop.~5.2]{CermelliLeoni06}. Since it is computationally heavy, we first introduce some convenient notation and recall some formulas from \cite{CermelliLeoni06}.

For an angle $\phi \in [0, 2\pi)$, we denote with $J_\phi\in SO(2)$ the matrix corresponding to a counter-clockwise rotation by $\phi$. We further set $\hat r_\phi := J_\phi e_1 \in \mathbb S$. We take $K^\phi$ and $\C$ as in \eqref{VOmega}, where
\begin{multline} \label{for:Kphi:explicit}
  K^\phi ( r, \theta )
  = \frac{ 1 }{ 2 \pi (\lambda + 2 \mu) } \frac 1 r \Big[ 
     \mu \sin ( \phi - \theta ) \, \hat r_\theta \otimes \hat r_\theta 
     + (2 \lambda + 3 \mu) \cos ( \phi - \theta ) \, \hat r_\theta \otimes \hat r_{\theta + \tfrac\pi2} \\
     - \mu \cos ( \phi - \theta ) \, \hat r_{\theta + \tfrac\pi2} \otimes \hat r_\theta
     + \mu \sin ( \phi - \theta ) \, \hat r_{\theta + \tfrac\pi2} \otimes \hat r_{\theta + \tfrac\pi2} \Big]
\end{multline}
in polar coordinates. In \cite[Rem.~3.2]{CermelliLeoni06} it is shown that this expression for $K^\phi$ is a solution to \eqref{Kphi:eqn}. We further set $K := K^0$ and $K_x^\phi (z) = K^\phi(z - x)$. Since $\C$ is rotationally invariant, we have for any angle $\phi$ and any $A, B \in \R^{2 \times 2}$ that
\begin{equation} \label{fa:C:Rinv}
 \C (J_\phi A J_{-\phi}) : (J_\phi B J_{-\phi}) = \C A : B. 
\end{equation} 

The stress corresponding to $K = K^0$ is
\begin{multline*}
  \C K ( r, \theta )
  = \frac{ \mu (\lambda + \mu) }{ \pi (\lambda + 2 \mu) } \frac 1 r \Big[ 
     - (\sin \theta ) \, \hat r_\theta \otimes \hat r_\theta 
     + (\cos \theta ) \, \big( \hat r_\theta \otimes \hat r_{\theta + \tfrac\pi2}
     + \hat r_{\theta + \tfrac\pi2} \otimes \hat r_\theta \big) \\
     - (\sin \theta ) \, \hat r_{\theta + \tfrac\pi2} \otimes \hat r_{\theta + \tfrac\pi2} \Big].
\end{multline*}
Outside of the branch cut parametrised by $\alpha e_1$ for $\alpha \geq 0$, $K^\phi$ is the gradient of
\begin{equation*}
  w^\phi ( r, \theta ) 
  = \frac1{2\pi} \Big[ \theta \hat r_\phi 
     + \frac{ \mu }{ \lambda + 2 \mu } (- \log r) \, \hat r_{\phi + \tfrac\pi2}
     - \frac{ \lambda + \mu }{ 2 (\lambda + 2 \mu) } \big( \sin (\phi - \theta) \hat r_\theta + \cos (\phi - \theta) \hat r_{\theta + \tfrac\pi2} \big) \Big].
\end{equation*}
Along the branch cut, we observe that the jump is given by
$$\llbracket w^\phi \rrbracket (r) 
:= w^\phi (r, 0+) - w^\phi (r, 2 \pi-)
= - \hat r_\phi.$$

Next we prove the auxiliary identity
\begin{equation} \label{for:Kphi:ito:rotations}
  K^\phi = J_\phi ( K \circ J_{-\phi} ) J_{-\phi}
  \quad \text{for all } \phi \in [0, 2\pi).
\end{equation}
From \eqref{for:Kphi:explicit} we observe that $K^\phi (r, \theta)$ consists of a sum of four terms, each of the form $\frac1r f (\theta- \phi) \hat r_{\theta + \alpha} \otimes \hat r_{\theta + \beta}$, with $f$ a scalar function and $\alpha, \beta \in \{0, \frac\pi2 \}$. From the identity
$$
  J_\phi ( \hat r_{\theta} \otimes \hat r_{\vartheta} ) J_{-\phi} 
  = (J_\phi \hat r_{\theta}) \otimes (J_{-\phi}^T \hat r_{\vartheta} ) 
  = \hat r_{\theta + \phi} \otimes \hat r_{\vartheta + \phi}
  \quad \text{for all } \phi, \theta, \vartheta,
$$
we observe that    
\begin{equation*}
  J_\phi \big( (f \, \hat r_{\cdot + \alpha} \otimes \hat r_{\cdot + \beta} ) \circ J_{-\phi} \big) (\theta) J_{-\phi}
  = J_\phi \Big( f (\theta - \phi) ( \hat r_{\theta + \alpha - \phi} \otimes \hat r_{\theta + \beta - \phi} ) \Big) J_{-\phi}
  = f (\theta - \phi) \, \hat r_{\theta + \alpha} \otimes \hat r_{\theta + \beta}
\end{equation*}
and the identity \eqref{for:Kphi:ito:rotations} follows.

\begin{proof}[Proof of Proposition \ref{p:V:CKK}]
First we prove \eqref{V:VOm:gOm} for $\Omega = B(x,1)$. We start by simplifying  the expression in \eqref{VOmega} by using \eqref{fa:C:Rinv} and \eqref{for:Kphi:ito:rotations}. This yields
\begin{align*}
  &\int_{B(x,1)} \C K^\phi (z - x) : K^\psi (z - y) \, dz \\
  &= \int_{B(0,1)} \C K^\phi (\zeta) : K^\psi (\zeta - (y-x)) \, d\zeta \\
  &= \int_{B(0,1)} \C K (J_{-\phi} \zeta) : \Big( J_{\psi-\phi} K \big( J_{-\psi}[\zeta - (y-x)] \big) J_{\phi-\psi} \Big) \, d\zeta \\
  &= \int_{B(0,1)} \C K (J_{-\phi} \zeta) : K^{\psi - \phi} \big( J_{-\phi}[\zeta - (y-x)] \big) \, d\zeta \\
  &= \int_{B(0,1)} \C K (\eta) : K^{\psi - \phi} \big( \eta -  J_{-\phi} (y-x) \big) \, d\eta.
\end{align*}
By further setting $\tilde \phi := \psi - \phi$ and $\tilde x := J_{\phi} (x-y)$, we obtain by changing coordinates that
\begin{equation} \label{prop:connn:V:CKK:pf:05}
  \int_{B(x,1)} \C K^\phi (z - x) : K^\psi (z - y) \, dz
  = \int_{B(0,1)} \C K (z) : K^{\tilde \phi} \big( z -  \tilde x \big) \, dz
  = \int_{B(0,1)} \C K : K_{\tilde x}^{\tilde \phi}.
\end{equation}
In the remainder we focus only on the right-hand side. We remove the tildes for convenience.

In terms of dislocation interactions, \eqref{prop:connn:V:CKK:pf:05} corresponds to the setting in Figure \ref{fig:prop:conn:CKK:V}, where $x = (r, \theta)$ are the polar coordinates. To apply integration by parts on $\int_{B(0,1)} \C K : K_x^\phi$, we use that $K_x^\phi = \nabla w_x^\phi$ away from a branch cut connecting $x$ to $\partial B(0,1)$. We take the particular branch cut $\Gamma$ as in Figure \ref{fig:prop:conn:CKK:V}, which is parametrised by $\gamma(s) = s \hat r_\theta$ with $ s \in [r, 1]$. Then, using $\Div \C K = 0$, we obtain
\begin{equation} \label{prop:connn:V:CKK:pf:1}
  \int_{B(0,1)} \C K : K_x^\phi 
  = \int_{B(0,1) \setminus \Gamma} \C K : \nabla w_x^\phi
  = \int_{\partial B(0,1)} w_x^\phi \cdot \C K \cdot n 
    + \int_\Gamma \llbracket w_x^\phi \rrbracket \cdot \C K \cdot n.
\end{equation}

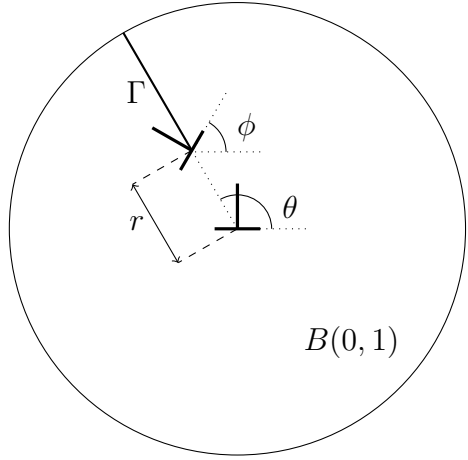
\begin{figure}[h!]
\centering
\begin{tikzpicture}[scale=3]
    \def \dlc {0.1}
    \def \sqrtthree {1.70}
    \def \r {0.4}
    \def \x {\r * 0.5}
    \def \y {\r * \sqrtthree * 0.5}
    
    \draw (0,0) circle (1);
    \draw[dotted] (0.3, 0) -- (0, 0);
    \draw[very thick] (- \dlc, 0) -- (\dlc, 0);
    \draw[very thick] (0, 0) -- (0, 2*\dlc);
    \draw (0.15,0) node[anchor = south west] {$\theta$} arc (0:120:0.15); 
    \draw (0.5, -0.5) node{$B(0,1)$};
    
    \begin{scope}[shift={(-\x, \y)}, scale=1]
      \draw[dotted] (0.3, 0) -- (0, 0);
      \begin{scope}[rotate = 60, scale=1]
        \draw[dotted] (0.3, 0) -- (0, 0); 
      \end{scope}
      \draw (0.15,0) node[anchor = south west] {$\phi$} arc (0:60:0.15);     
    \end{scope}
    
	\begin{scope}[rotate = 120, scale=1]
	  \draw[thick] (\r,0) -- (1,0) node[midway, left] {$\Gamma$}; 
	  \draw[dotted] (0,0) -- (\r,0);  
	  \draw[dashed] (0,0) -- (0, 0.3); 
	  \draw[dashed] (\r,0) -- (\r, 0.3);   
	  \draw[<->] (0, 0.3) -- (\r, 0.3) node[midway, left] {$r$};
      \begin{scope}[shift={(\r, 0)}, rotate = 300, scale=1]
        \draw[very thick] (- \dlc, 0) -- (\dlc, 0);
        \draw[very thick] (0, 0) -- (0, 2*\dlc); 
      \end{scope}
    \end{scope}
\end{tikzpicture} \hspace{10mm}
\caption{Schematic setting of \eqref{prop:connn:V:CKK:pf:1}.}
\label{fig:prop:conn:CKK:V}
\end{figure}

The second integral in \eqref{prop:connn:V:CKK:pf:1} equals
\begin{multline*}
  \int_\Gamma \llbracket w_x^\phi \rrbracket \cdot \C K \cdot n
  = \int_r^1 (-\hat r_\phi) \C K (\rho, \theta) \cdot \hat r_{\theta - \tfrac\pi2} \, d\rho \\
  = \frac{ \mu (\lambda + \mu) }{ \pi (\lambda + 2 \mu) } 
    \big( \cos \theta \cos (\phi - \theta) - \sin \theta \cos (\phi - \theta) \big)
    \int_r^1 \frac 1 \rho \, d\rho 
  = \frac{ \mu (\lambda + \mu) }{ \pi (\lambda + 2 \mu) } ( \cos \phi ) (- \log r).
\end{multline*}
We split the first integral in \eqref{prop:connn:V:CKK:pf:1} as
\begin{multline} \label{prop:connn:V:CKK:pf:2}
  \int_{\partial B(0,1)} w_x^\phi \cdot \C K \cdot n 
  = \int_{\partial B(0,1)} w^\phi \cdot \C K \cdot n
    + \int_{\partial B(0,1) \setminus B( \hat r_\theta,2r) } (w_x^\phi - w^\phi) \cdot \C K \cdot n \\
    + \int_{\partial B(0,1) \cap B( \hat r_\theta,2r ) } (w_x^\phi - w^\phi) \cdot \C K \cdot n.
\end{multline}
Since $w_x^\phi$ converges uniformly to $w^\phi$ as $x \to 0$ in any compact set which is disjoint with the branch cut, the second integral in the right-hand side of \eqref{prop:connn:V:CKK:pf:2} is $\mathcal O (r)$ uniformly in $\phi$. The third integral is smaller than $C r$ because $w_x^\phi$, $w^\phi$ and $K$ are bounded in any annulus where the inner disk contain $0$ and $x$. 

We rewrite the first integral in \eqref{prop:connn:V:CKK:pf:2} as
\begin{equation*}
  \int_{\partial B(0,1)} w^\phi \cdot \C K \cdot n 
  = \int_\theta^{\theta + 2 \pi} w^\phi (1, \vartheta) \cdot \C K (1, \vartheta) \cdot \hat r_\vartheta \, d \vartheta.
\end{equation*}
To evaluate the integrand, we first compute
\begin{equation*}
  \C K (1, \vartheta) \cdot \hat r_\vartheta 
  = \frac{ \mu (\lambda + \mu) }{ \pi (\lambda + 2 \mu) } \Big[ 
     - (\sin \vartheta ) \, \hat r_\vartheta 
     + (\cos \vartheta ) \, \hat r_{\vartheta + \tfrac\pi2} \Big].
\end{equation*}
Then, we rewrite the integrand as
\begin{align*}
  w^\phi (1, \vartheta) \cdot \C K (1, \vartheta) \cdot \hat r_\vartheta 
  &= \frac{ \mu (\lambda + \mu) }{ 2 \pi^2 (\lambda + 2 \mu) } \bigg[ 
    \vartheta \big( - \sin \vartheta \cos (\phi - \vartheta) + \cos \vartheta \sin (\phi - \vartheta) \big) \\
  &\qquad \qquad \qquad - \frac{ (\lambda + \mu) }{ 2 (\lambda + 2 \mu) } \big( - \sin \vartheta \sin (\phi - \vartheta) + \cos \vartheta \cos (\phi - \vartheta) \big) \bigg] \\
  &= \frac{ \mu (\lambda + \mu) }{ 2 \pi^2 (\lambda + 2 \mu) } \bigg[ 
    \vartheta \sin (\phi - 2 \vartheta) - \frac{ (\lambda + \mu) }{ 2 (\lambda + 2 \mu) } \cos \phi \bigg],
\end{align*}
from which we easily see that
\begin{equation*}
  \int_\theta^{\theta + 2 \pi} w^\phi (1, \vartheta) \cdot \C K (1, \vartheta) \cdot \hat r_\vartheta \, d \vartheta
  = \frac{ \mu (\lambda + \mu) }{ 2 \pi (\lambda + 2 \mu) } \bigg[ 
    \cos (\phi - 2 \theta) - \frac{ (\lambda + \mu) }{ (\lambda + 2 \mu) } \cos \phi \bigg].
\end{equation*}

Collecting our results, we obtain from \eqref{prop:connn:V:CKK:pf:1} that
\begin{multline*} 
  \frac{ \pi ( \lambda + 2 \mu ) }{ \mu (\lambda + \mu) } \int_{B(0,1)} \C K : K_x^\phi 
  = \frac12 \cos (2 \theta - \phi) - \frac{ (\lambda + \mu) }{ 2(\lambda + 2 \mu) } \cos \phi - (\cos \phi) \log r + \tilde g_\phi (x) \\
  = V(x; \phi, 0) + g_\phi (x)
\end{multline*}
for some $\tilde g_\phi, g_\phi \in C(\R^2)$ satisfying $g_\phi (x) \to 0$ as $x \to 0$ uniformly in $\phi$. Thus, translating back to the original coordinates via \eqref{prop:connn:V:CKK:pf:05}, we obtain
\begin{multline*}
  \frac{ \pi ( \lambda + 2 \mu ) }{ \mu (\lambda + \mu) } \int_{B(x,1)} \C K^\phi (z - x) : K^\psi (z - y) \, dz
  = \frac{ \pi ( \lambda + 2 \mu ) }{ \mu (\lambda + \mu) } \int_{B(0,1)} \C K (z) : K^{\tilde \phi} \big( z -  \tilde x \big) \, dz \\
  = V(\tilde x; \tilde \phi, 0) + g_{\tilde \phi} (\tilde x)
  = V(x - y; \phi, \psi) + g_{\phi - \psi} \big(J_{\phi} (x-y) \big).
\end{multline*}
We conclude that \eqref{V:VOm:gOm} holds for $\Omega = B(0,1)$.

Next we prove \eqref{V:VOm:gOm} for any bounded Lipschitz domain $\Omega$. By the translation invariance, it is not restrictive to assume $0 \in \Omega$. We write
\begin{equation*}
  \int_\Omega \C K : K_x^\phi
  = \int_{B(0,1)} \C K : K_x^\phi
    + \int_{\Omega \setminus B(0,1)} \C K : K_x^\phi
    - \int_{B(0,1) \setminus \Omega} \C K : K_x^\phi
\end{equation*}
and show that the second and third integrals in the right-hand side are continuous in $x$ at $0$. We focus on the second integral, since the argument for the third works analogously. We rewrite it as
\begin{equation} \label{prop:connn:V:CKK:pf:3}
  \int_{\Omega \setminus B(0,1)} \C K : K_x^\phi
  = \int_{\Omega \setminus B(0,1)} \C K : K^\phi
    + \int_{\Omega \setminus B(0,1)} \C K : (K_x^\phi - K^\phi).
\end{equation}
Since $\Omega \setminus B(0,1)$ is contained in some annulus centred at $0$, we find that $K$ and $K^\phi$ are uniformly bounded on $\Omega \setminus B(0,1)$, and thus the first integral in the right-hand side of \eqref{prop:connn:V:CKK:pf:3} is finite and an independent of $x$. Moreover, $K_x^\phi \to K^\phi$ uniformly in any annulus centred at $0$ as $x \to 0$. Therefore, the value of the second integral is continuous in $x$ at $0$. This concludes the proof of \eqref{V:VOm:gOm}.

We establish \eqref{V:CKK} with similar arguments. Again, we change coordinates to rewrite the first term in the right-hand side of \eqref{V:CKK} as 
\begin{equation*}
  \int_{B(0,1) \cap B(x,1)} \C K : K_x^\phi
  = \int_{B(0,1)} \C K : K_x^\phi
    - \int_{B(0,1) \setminus B(x,1)} \C K : K_x^\phi.
\end{equation*}
For the first term we apply \eqref{V:VOm:gOm}. The second term vanishes as $x \to 0$, because $| B(0,1) \setminus B(x,1) | \to 0$ as $x \to 0$ and $K$ and $K_x^\phi$ are uniformly bounded on $B(0,1) \setminus B(x,1)$ for all $|x|$ small enough.
\end{proof}


\noindent\textbf{Acknowledgments}.
This work is supported by the International Research Fellowship of the Japanese Society for the Promotion of Science, together with the JSPS KAKENHI grant 15F15019. The author expresses his gratitude to A.\ Garroni, M.\ A.\ Peletier and L.\ Scardia for their substantial contributions to this work.



\newcommand{\etalchar}[1]{$^{#1}$}


\begin{thebibliography}{ADLGP16}

\bibitem[AA20]{AroraAcharya20}
R.~Arora and A.~Acharya.
\newblock A unification of finite deformation ${J}_2$ {V}on-{M}ises plasticity
  and quantitative dislocation mechanics.
\newblock {\em ArXiv: 2004.05647}, 2020.

\bibitem[Ach01]{Acharya01}
A.~Acharya.
\newblock A model of crystal plasticity based on the theory of continuously
  distributed dislocations.
\newblock {\em Journal of the Mechanics and Physics of Solids}, 49(4):761--784,
  2001.

\bibitem[ACH{\etalchar{+}}05]{AlvarezCarliniHochLBouarMonneau05}
O.~Alvarez, E.~Carlini, P.~Hoch, Y.~Le~Bouar, and R.~Monneau.
\newblock Dislocation dynamics described by non-local hamilton--jacobi
  equations.
\newblock {\em Materials Science and Engineering: A}, 400:162--165, 2005.

\bibitem[ADLGP14]{AlicandroDeLucaGarroniPonsiglione14}
R.~Alicandro, L.~De~Luca, A.~Garroni, and M.~Ponsiglione.
\newblock Metastability and dynamics of discrete topological singularities in
  two dimensions: a {$\Gamma$}-convergence approach.
\newblock {\em Archive for Rational Mechanics and Analysis}, 214(1):269--330,
  2014.

\bibitem[ADLGP16]{AlicandroDeLucaGarroniPonsiglione16}
R.~Alicandro, L.~De~Luca, A.~Garroni, and M.~Ponsiglione.
\newblock Dynamics of discrete screw dislocations on glide directions.
\newblock {\em Journal of the Mechanics and Physics of Solids}, 92:87--104,
  2016.

\bibitem[AO05]{ArizaOrtiz05}
M.~P. Ariza and M.~Ortiz.
\newblock Discrete crystal elasticity and discrete dislocations in crystals.
\newblock {\em Archive for Rational Mechanics and Analysis}, 178(2):149--226,
  2005.

\bibitem[BBP17]{BerendsenBurgerPietschmann17}
J.~Berendsen, M.~Burger, and J.~Pietschmann.
\newblock On a cross-diffusion model for multiple species with nonlocal
  interaction and size exclusion.
\newblock {\em Nonlinear Analysis}, 159:10--39, 2017.

\bibitem[BM17]{BlassMorandotti17}
T.~Blass and M.~Morandotti.
\newblock Renormalized energy and {P}each-{K}{\"o}hler forces for screw
  dislocations with antiplane shear.
\newblock {\em Journal of Convex Analysis}, 24(2):547--570, 2017.

\bibitem[CAWB06]{CaiArsenlisWeinbergerBulatov06}
W.~Cai, A.~Arsenlis, C.R. Weinberger, and V.V. Bulatov.
\newblock A non-singular continuum theory of dislocations.
\newblock {\em Journal of the Mechanics and Physics of Solids}, 54(3):561--587,
  2006.

\bibitem[CGO15]{ContiGarroniOrtiz15}
S.~Conti, A.~Garroni, and M.~Ortiz.
\newblock The line-tension approximation as the dilute limit of linear-elastic
  dislocations.
\newblock {\em Archive for Rational Mechanics and Analysis}, 218(2):699--755,
  2015.

\bibitem[CL05]{CermelliLeoni06}
P.~Cermelli and G.~Leoni.
\newblock Renormalized energy and forces on dislocations.
\newblock {\em SIAM Journal on Mathematical Analysis}, 37(4):1131--1160, 2005.

\bibitem[CP18]{CanizoPatacchini18DOI}
J.~A. Ca{\~n}izo and F.~S. Patacchini.
\newblock Discrete minimisers are close to continuum minimisers for the
  interaction energy.
\newblock {\em Calculus of Variations and Partial Differential Equations},
  57(1):1--24, 2018.

\bibitem[CXZ16]{ChapmanXiangZhu15}
S.~J. Chapman, Y.~Xiang, and Y.~Zhu.
\newblock Homogenization of a row of dislocation dipoles from discrete
  dislocation dynamics.
\newblock {\em SIAM Journal on Applied Mathematics}, 76(2):750--775, 2016.

\bibitem[DFF13]{DiFrancescoFagioli13}
M.~Di~Francesco and S.~Fagioli.
\newblock Measure solutions for non-local interaction pdes with two species.
\newblock {\em Nonlinearity}, 26(10):2777--2808, 2013.

\bibitem[DFF16]{DiFrancescoFagioli16}
M.~Di~Francesco and S.~Fagioli.
\newblock A nonlocal swarm model for predators--prey interactions.
\newblock {\em Mathematical Models and Methods in Applied Sciences},
  26(02):319--355, 2016.

\bibitem[DLGP12]{DeLucaGarroniPonsiglione12}
L.~De~Luca, A.~Garroni, and M.~Ponsiglione.
\newblock {$\Gamma$}-convergence analysis of systems of edge dislocations: the
  self energy regime.
\newblock {\em Archive for Rational Mechanics and Analysis}, 206(3):885--910,
  2012.

\bibitem[EFK17]{EversFetecauKolokolnikov17}
J.~H.~M. Evers, R.~C. Fetecau, and T.~Kolokolnikov.
\newblock Equilibria for an aggregation model with two species.
\newblock {\em SIAM Journal on Applied Dynamical Systems}, 16(4):2287--2338,
  2017.

\bibitem[EHIM09]{ElHajjIbrahimMonneau09}
A.~El~Hajj, H.~Ibrahim, and R.~Monneau.
\newblock Dislocation dynamics: from microscopic models to macroscopic crystal
  plasticity.
\newblock {\em Continuum Mechanics and Thermodynamics}, 21(2):109--123, 2009.

\bibitem[FG07]{FocardiGarroni07}
M.~Focardi and A.~Garroni.
\newblock A {1D} macroscopic phase field model for dislocations and a second
  order {$\Gamma$}-limit.
\newblock {\em Multiscale Modeling \& Simulation}, 6(4):1098--1124, 2007.

\bibitem[FIM08]{ForcadelImbertMonneau08}
N.~Forcadel, C.~Imbert, and R.~Monneau.
\newblock {\em On the Notions of Solutions to Nonlinear Elliptic Problems:
  Results and Developments}, chapter Viscosity solutions for particle systems
  and homogenization of dislocation dynamics.
\newblock Department of Mathematics of the Seconda Universita di Napoli, 2008.

\bibitem[FIM09]{ForcadelImbertMonneau09}
N.~Forcadel, C.~Imbert, and R.~Monneau.
\newblock Homogenization of the dislocation dynamics and of some particle
  systems with two-body interactions.
\newblock {\em Discrete and Continuous Dynamical Systems A}, 23(3):785--826,
  2009.

\bibitem[FIM12]{ForcadelImbertMonneau12}
N.~Forcadel, C.~Imbert, and R.~Monneau.
\newblock Homogenization of accelerated {F}renkel-{K}ontorova models with $n$
  types of particles.
\newblock {\em Transactions of the American Mathematical Society},
  364(12):6187--6227, 2012.

\bibitem[Gin19]{Ginster19}
J.~Ginster.
\newblock Plasticity as the $\gamma$-limit of a two-dimensional dislocation
  energy: The critical regime without the assumption of well-separateness.
\newblock {\em Archive for Rational Mechanics and Analysis}, 233(3):1253--1288,
  2019.

\bibitem[GLP10]{GarroniLeoniPonsiglione10}
A.~Garroni, G.~Leoni, and M.~Ponsiglione.
\newblock Gradient theory for plasticity via homogenization of discrete
  dislocations.
\newblock {\em Journal European Mathematical Society}, 12(5):1231--1266, 2010.

\bibitem[GM06]{GarroniMueller06}
A.~Garroni and S.~M{\"u}ller.
\newblock A variational model for dislocations in the line tension limit.
\newblock {\em Archive for Rational Mechanics and Analysis}, 181(3):535--578,
  2006.

\bibitem[GPPS13]{GeersPeerlingsPeletierScardia13}
M.~G.~D. Geers, R.~H.~J. Peerlings, M.~A. Peletier, and L.~Scardia.
\newblock Asymptotic behaviour of a pile-up of infinite walls of edge
  dislocations.
\newblock {\em Archive for Rational Mechanics and Analysis}, 209:495--539,
  2013.

\bibitem[GvMPS16]{GarroniVanMeursPeletierScardia16}
A.~Garroni, P.~{v}an Meurs, M.~A. Peletier, and L.~Scardia.
\newblock Boundary-layer analysis of a pile-up of walls of edge dislocations at
  a lock.
\newblock {\em Mathematical Models and Methods in Applied Sciences},
  26(14):2735--2768, 2016.

\bibitem[GvMPS19]{GarroniVanMeursPeletierScardia19DOI}
A.~Garroni, P.~{v}an Meurs, M.~A. Peletier, and L.~Scardia.
\newblock Convergence and non-convergence of many-particle evolutions with
  multiple signs.
\newblock {\em Archive for Rational Mechanics and Analysis}, pages 1--46, 2019.
\newblock Published online.

\bibitem[Hal11]{Hall11}
C.~L. Hall.
\newblock Asymptotic analysis of a pile-up of regular edge dislocation walls.
\newblock {\em Materials Science and Engineering: A}, 530:144--148, 2011.

\bibitem[HCO10]{HallChapmanOckendon10}
C.~L. Hall, S.~J. Chapman, and J.~R. Ockendon.
\newblock Asymptotic analysis of a system of algebraic equations arising in
  dislocation theory.
\newblock {\em SIAM Journal on Applied Mathematics}, 70(7):2729--2749, 2010.

\bibitem[HHvM18]{HallHudsonVanMeurs18}
C.~L. Hall, T.~Hudson, and P.~van Meurs.
\newblock Asymptotic analysis of boundary layers in a repulsive particle
  system.
\newblock {\em Acta Applicandae Mathematicae}, 153(1):1--54, 2018.

\bibitem[HIM09]{HajjIbrahimMonneau09}
A.~El Hajj, H.~Ibrahim, and R.~Monneau.
\newblock Homogenization of dislocation dynamics.
\newblock In {\em IOP Conferences Series: Materials Science and Engineering},
  2009.

\bibitem[HL82]{HirthLothe82}
J.~P. Hirth and J.~Lothe.
\newblock {\em Theory of Dislocations}.
\newblock John Wiley \& Sons, New York, 1982.

\bibitem[HM17]{HudsonMorandotti17}
T.~Hudson and M.~Morandotti.
\newblock Qualitative properties of dislocation dynamics: collisions and
  boundary behaviour.
\newblock {\em SIAM Journal on Applied Mathematics}, 77(5):1678--1705, 2017.

\bibitem[KCO02]{KoslowskiCuitinoOrtiz02}
M.~Koslowski, A.~M. Cuitino, and M.~Ortiz.
\newblock A phase-field theory of dislocation dynamics, strain hardening and
  hysteresis in ductile single crystals.
\newblock {\em Journal of the Mechanics and Physics of Solids},
  50(12):2597--2635, 2002.

\bibitem[KvMar]{KimuraVanMeurs19acc}
M.~Kimura and P.~{v}an Meurs.
\newblock Regularity of the minimiser of one-dimensional interaction energies.
\newblock {\em ESAIM: Control, Optimisation and Calculus of Variations}, 2019,
  to appear.

\bibitem[LL16]{LauteriLuckhaus16ArXiv}
G.~Lauteri and S.~Luckhaus.
\newblock An energy estimate for dislocation configurations and the emergence
  of cosserat-type structures in metal plasticity.
\newblock {\em ArXiv:1608.06155}, 2016.

\bibitem[MP12]{MonneauPatrizi12}
R.~Monneau and S.~Patrizi.
\newblock Homogenization of the {P}eierls--{N}abarro model for dislocation
  dynamics.
\newblock {\em Journal of Differential Equations}, 253(7):2064--2105, 2012.

\bibitem[MPS17]{MoraPeletierScardia17}
M.~G. Mora, M.~A. Peletier, and L.~Scardia.
\newblock Convergence of interaction-driven evolutions of dislocations with
  {W}asserstein dissipation and slip-plane confinement.
\newblock {\em SIAM Journal on Mathematical Analysis}, 49(5):4149--4205, 2017.

\bibitem[MRS19]{MoraRondiScardia19}
M.~G. Mora, L.~Rondi, and L.~Scardia.
\newblock The equilibrium measure for a nonlocal dislocation energy.
\newblock {\em Communications on Pure and Applied Mathematics}, 72(1):136--158,
  2019.

\bibitem[MSZ14]{MullerScardiaZeppieri14}
S.~M{\"u}ller, L.~Scardia, and C.~I. Zeppieri.
\newblock Geometric rigidity for incompatible fields, and an application to
  strain-gradient plasticity.
\newblock {\em Indiana University Mathematics Journal}, pages 1365--1396, 2014.

\bibitem[Nab47]{Nabarro47}
F.~R.~N. Nabarro.
\newblock Dislocations in a simple cubic lattice.
\newblock {\em Proceedings of the Physical Society}, 59(2):256, 1947.

\bibitem[Pei40]{Peierls40}
R.~Peierls.
\newblock The size of a dislocation.
\newblock {\em Proceedings of the Physical Society}, 52(1):34--37, 1940.

\bibitem[vM15]{VanMeurs15}
P.~{v}an Meurs.
\newblock {\em Discrete-to-Continuum Limits of Interacting Dislocations}.
\newblock PhD thesis, Eindhoven University of Technology, 2015.

\bibitem[vM18]{vanMeurs18}
P.~van Meurs.
\newblock Many-particle limits and non-convergence of dislocation wall
  pile-ups.
\newblock {\em Nonlinearity}, 31:165--225, 2018.

\bibitem[vMM14]{VanMeursMuntean14}
P.~{v}an Meurs and A.~Muntean.
\newblock Upscaling of the dynamics of dislocation walls.
\newblock {\em Advances in Mathematical Sciences and Applications},
  24(2):401--414, 2014.

\bibitem[vMM19]{VanMeursMorandotti19}
P.~van Meurs and M.~Morandotti.
\newblock Discrete-to-continuum limits of particles with an annihilation rule.
\newblock {\em SIAM Journal on Applied Mathematics}, 79(5):1940--1966, 2019.

\bibitem[vMMP14]{VanMeursMunteanPeletier14}
P.~{v}an Meurs, A.~Muntean, and M.~A. Peletier.
\newblock Upscaling of dislocation walls in finite domains.
\newblock {\em European Journal of Applied Mathematics}, 25(6):749--781, 2014.

\bibitem[Zin16]{Zinsl16}
J.~Zinsl.
\newblock Geodesically convex energies and confinement of solutions for a
  multi-component system of nonlocal interaction equations.
\newblock {\em Nonlinear Differential Equations and Applications NoDEA},
  23(4):1--43, 2016.

\end{thebibliography}
\end{document}